\documentclass{aims}
\usepackage{amsmath}
\usepackage{paralist}
\usepackage{graphics} 
\usepackage{epsfig} 
\usepackage{graphicx}  \usepackage{epstopdf}
\usepackage[dvipsnames]{xcolor}
\usepackage[colorlinks=true]{hyperref}
\hypersetup{urlcolor=blue, citecolor=red}

\usepackage{booktabs}
\usepackage{empheq}
\usepackage{tikz-cd}

\textheight=8.2 true in
\textwidth=5.0 true in
\topmargin 30pt
\setcounter{page}{1}



\usepackage{bm}
\usepackage{mathtools}
\usepackage{cleveref}
\usepackage{xspace}
\usepackage{csquotes}

\usepackage{palatino}

\newtheorem{theorem}{Theorem}[section]
\newtheorem{corollary}{Corollary}[section]

\newtheorem{lemma}[theorem]{Lemma}
\newtheorem{proposition}{Proposition}

\theoremstyle{definition}
\newtheorem{definition}[theorem]{Definition}
\newtheorem{remark}{Remark}[section]

\crefname{theorem}{Theorem}{Theorems}
\crefname{corollary}{Corollary}{Corollaries}
\crefname{hypothesis}{Hypothesis}{Hypotheses}
\crefname{notation}{Notation}{Notations}
\crefname{definition}{Definition}{Definitions}
\crefname{remark}{Remark}{Remarks}
\crefname{example}{Example}{Examples}
\crefname{lemma}{Lemma}{Lemmata}
\crefname{proposition}{Proposition}{Propositions}
\crefname{assumption}{Assumption}{Assumptions}
\crefname{figure}{Figure}{Figures}

\crefformat{equation}{(#2#1#3)}
\crefrangeformat{equation}{(#3#1#4) to~(#5#2#6)}
\crefmultiformat{equation}{(#2#1#3)}{ and~(#2#1#3)}{, (#2#1#3)}{ and~(#2#1#3)}

\newcommand{\Cal}[1]{\mathcal{#1}}
\newcommand{\Op}[1]{\mathrm{#1}}
\newcommand{\RR}{\mathbb{R}}
\newcommand{\Rny}{{\mathbb{R}^{n_y}}}
\newcommand{\Rnx}{{\mathbb{R}^{n_x}}}
\newcommand{\Bf}[1]{\boldsymbol{#1}}
\newcommand{\BfX}{{\Bf{X}}}
\newcommand{\BfCalX}{\boldsymbol{\Cal X}}

\newcommand{\BfCalXN}{\boldsymbol{\Cal X}^{(N)}}

\newcommand{\BfXi}{{\Bf{X}_i}}
\newcommand{\BfXj}{{\Bf{X}_j}}

\newcommand{\Bfx}{{\Bf{x}}}
\newcommand{\Bfy}{{\Bf{y}}}
\newcommand{\Bfv}{{\Bf{v}}}
\newcommand{\init}[1]{{#1}^{\mathrm{in}}}
\newcommand{\Dif}{\mathrm{\partial}}
\newcommand{\DifX}{\Dif_\BfX}
\newcommand{\DifXi}{\Dif_{\BfX_i}}
\newcommand{\DifXj}{\Dif_{\BfX_j}}
\newcommand{\Dify}{\Dif_\Bfy}
\newcommand{\PRnx}[1]{\mathcal{P}^{#1}(\Rnx)}
\newcommand{\fai}{\quad \text{for~} 1 \leq i \leq N}

\newcommand{\dif}{\mathrm{d}}

\newcommand{\Inv}{\mathrm{Inv}}	

\newcommand{\norm}[1]{\left\Vert #1 \right\Vert}

\newcommand{\eff}[1]{{#1}_\mathrm{eff}}   
\newcommand{\mf}[1]{{#1}_\mathrm{mf}}     
\newcommand{\bN}{{(N)}}  

\newcommand{\effN}[1]{{#1}_\mathrm{eff}^{\bN}}   

\newcommand{\effO}[1]{\eff{#1}^{(1)}}
\newcommand{\mfO}[1]{\mf{#1}^{(1)}}

\newcommand{\pN}{{(N)}}

\newcommand{\simeon}[1]{\textcolor{Black}{#1}}

\newcommand{\crefDAE}{\cref{eq:x_micro_ind3,eq:y_micro_ind3,eq:g_micro_ind3}\xspace}
\newcommand{\crefODE}{\cref{eq:y_micro_ode,eq:x_micro_ode}\xspace}

\title[The mean-field limit for particle systems with uniform full-rank constraints] 
{The mean-field limit for particle systems with uniform full-rank constraints}

\author[Steffen Plunder and Bernd Simeon]{}

\subjclass{Primary: 70F45; Secondary: 34A09.}
\keywords{kinetic theory, mean-field limit, constrained particle system, algebraic-differential equations, muscle tissue dynamics}

\email{steffen.plunder@univie.ac.at}
\email{simeon@mathematik.uni-kl.de}


\thanks{$^*$ Corresponding author: Steffen Plunder}

\begin{document}
	\maketitle
	
	\centerline{\scshape Steffen Plunder$^*$}
	\medskip
	{\footnotesize
		\centerline{University of Vienna}
		\centerline{Department of Mathematics}
		\centerline{Oskar-Morgenstern-Straße 4, 1090 Vienna, Austria}
	} 
	
	\medskip
	
	\centerline{\scshape Bernd Simeon}
	\medskip
	{\footnotesize
		\centerline{Technische Universität Kaiserslautern}
		\centerline{Felix-Klein Zentrum}
		\centerline{Paul-Ehrlich Straße 31, 67663 Kaiserslautern}
	}
	
	\bigskip
	

	\begin{abstract}
		We consider a particle system with uniform coupling between a macroscopic component and individual particles. The constraint for each particle is of full rank, which implies that each movement of the macroscopic component leads to a movement of all particles and vice versa. 
		Skeletal muscle tissues share a similar property which motivates this work.		
		
		We prove convergence of the mean-field limit, well-posedness and a stability estimate for the mean-field PDE.		
		This work generalises our previous results from \cite{PlunderCoupledSystemsLinear2020} to the case of nonlinear constraints. 
		
	\end{abstract}
	
	\section{Introduction}
	\label{sec:intro}
	
	%
	%
	
	This article is about the question of how to apply the mean-field limit when individual particles are tightly coupled to the dynamics of an macroscopic component. Such a situation arises, for example, in microscopic models for skeletal muscle tissue. In such models, microscopic actin-myosin filaments take the role of \enquote{particles}, and the passive muscle tissue acts like a \enquote{macroscopic component}, as shown in \cref{fig:muscle_macro_particle_perspective}.
	Indeed, in terms of kinetic theory, the established muscle models are macroscopic approximations of the so called sliding filament theory \cite{HuxleyMuscleStructureTheories1957, HuxleyProposedMechanismForce1971, HuxleyChangesCrossStriations1954}.
	The most popular macroscopic approximation is the distributed moment method \cite{ZahalakDistributionMomentApproximation1981} which forms the foundation for most numerical simulations of muscle tissue which include fibers, see for example \cite{BoelMicromechanicalModellingSkeletal2008, GfrererFiberBasedModeling2021, HeidlaufMultiScaleContinuum2016}.
	
 	While sliding filament theory itself is well-established \cite{HerzogSkeletalMuscleMechanics2017, howardMechanicsMotorProteins2001, MaDistributionMomentModel1991, KeenerMathematicalPhysiology.Vol.2009, PhillipsPhysicalBiologyCell2012},
 	a rigorous application of the mean-field limit to realistic models for
 	muscle tissue is currently out of reach.
 	One challenge is to account for the coupling between the microscopic scale (actin-myosin filaments) and the macroscopic components (passive muscle tissue), see also \cite{SimeonModelMacroscaleDeformation2009}.
 	Each deformation of the passive muscle tissue deforms, detaches or breaks attached actin-myosin filaments. In terms of mathematical models, this introduces constraints between the passive muscle tissue and the filaments. To tackle this particular challenge, we will consider a simplified model that focuses solely on the coupling between a macroscopic component and particles. The main result of this article is a rigorous proof of the mean-field limit for such coupled particle systems.

 	\begin{figure}
 		\includegraphics[width=0.8\textwidth]{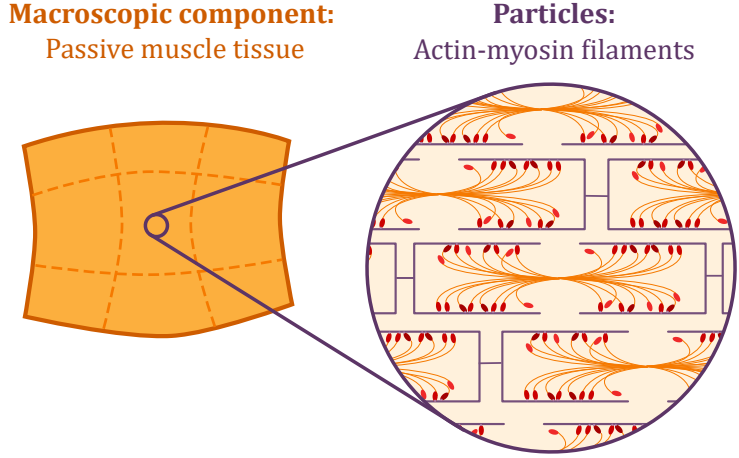}
 		\caption{Skeletal muscle tissue contains arrays of aligned actin-myosin filaments. The myosin heads (red) attach to the actin filament (purple). Millions of such actin-myosin filaments repeatedly perform a cycle of \enquote{attaching, pulling and detaching}. The resulting accumulated force leads to muscle contraction. In article, we ignore the repeated cycling. Instead, we focus on the mathematical implication of the coupling to the macroscopic component.}
 		\label{fig:muscle_macro_particle_perspective}
 	\end{figure} 	

	From a kinetic theory point-of-view, imposing a general constraint on particle positions is not straightforward. Many fundamental results do not generalise in an obvious way to nonlinear position spaces.
	For example, the diffusive scaling limit requires that $(t, \Bf x) \mapsto (\varepsilon t, \varepsilon^2 \Bf x)$ is well-defined for $\varepsilon \to 0$. However, if $\Bfx$ is subject to constraints, this scaling is ill-posed since $\varepsilon^2 \Bf x$ might not satisfy the position constraints.
	Also, other common approaches for deriving macroscopic equations (averaging, moment approximations) are not directly compatible with position constraints. Up to our knowledge, there is no canonical way to rigorously derive macroscopic equations when the particles are subject to general constraints.
	However, the particle model we consider in this article has a specific structure that allows us to derive the mean-field limit and admits macroscopic approximations. We extend the approach from \cite[Section 1.4]{GolseDynamicsLargeParticle2016} to such particle systems with constraints. For further references on kinetic theory, we refer to \cite{CercignaniMathematicalTheoryDilute1994, JabinReviewMeanField2014, DegondMacroscopicLimitsBoltzmann2004, SpohnLargeScaleDynamics1991}.

	\subsection*{\simeon{Major results at a glance}}
		
	Our starting point is a system of differential-algebraic equations (DAE) that describes the motion of particles with positions $\BfX_1, \dots, \BfX_N \in \Rnx$ and a macroscopic component with state $\Bfy \in \Rny$.
	We impose the algebraic constraints
	\[
	g(\BfX_i, \Bfy) = \mathrm{const.} \fai
	\]
	where $g : \Rnx \times \Rny \to \Rnx$ is a nonlinear constraint function. These constraints are uniform, in the sense that all particles have to keep the same constraint function constant, and the constant only depends on the initial condition. 
	The DAE system which we study in this article is the constrained Newton equation
	\begin{subequations}
	\begin{empheq}[left={\empheqbiglbrace~}]{align*}
		m \ddot{\BfX}_i &= F_1(\BfX_i) - \nabla_{\BfX_i} g(\BfX_i,\Bfy) \Bf\lambda_i 
		&\fai,
		\\
		\ddot \Bfy &= F_0(\Bfy) - \frac{1}{N} \sum_{j=1}^N \nabla_{\Bfy} g(\BfX_j,\Bfy) \Bf\lambda_j , &
		\\
		g(\BfX_i,\Bfy) &= g(\init \BfX_i, \init \Bfy)
		&\fai.
	\end{empheq}
\end{subequations}
	A key assumption is that the constraint function $g$ is of full rank with respect to $\BfX_i$.
	Hence, there is a linear map $\Phi(\BfXi,\Bfy)$ which translates the velocity of the macroscopic component to the velocities of the particles, i.e. $\dot \BfXi = \Phi(\BfXi,\Bfy)[ \dot{\Bfy} ]$.
	This relation allows the elimination of the Lagrangian multipliers, and we arrive at the following equivalent ODE model
	\begin{subequations}
	\begin{empheq}[left={\empheqbiglbrace~}]{align*}
		\Big( \frac{1}{N} \sum_{j=1}^N \effO{m}(\BfXj,\Bfy) \Big)  \, \ddot \Bfy &= \frac{1}{N} \sum_{j=1}^N \effO{F}(\BfXj,\Bfy,\dot \Bfy),&
		\\
		\dot \BfXi &= \Phi(\BfXi,\Bfy) [ \dot \Bfy ] &\fai
	\end{empheq}
\end{subequations}
	where $\effO{m}$ and $\effO{F}$ are the effective mass and effective forces of the macroscopic component. The force each particle exerts onto the macroscopic component is included in $\effO{m}$ and $\effO{F}$. 
	
	With the ODE model, we can mostly follow a standard procedure for proving the mean-field limit. 
	The mean-field PDE is given by
	\begin{subequations}
	\begin{empheq}[left={\empheqbiglbrace~}]{align*}
		\Big( \int \effO{m}(x,\Bfy) f(x,t) \, \dif x
		\Big) \, \ddot \Bfy &= \int \effO{F}(x,\Bfy,\dot \Bfy) f(x,t) \, \dif x,
		\\
		\partial_t f(x,t) + \mathrm{div}_x \big( f(x,t) \, \Phi(x,\Bfy)[\dot \Bfy] \big) &= 0
	\end{empheq}
\end{subequations}
	where $f(x,t)$ is the density of the particles with the informal relation $\BfXi(t) \sim f(x,t) \dif x$.
	A difference to classical mean-field equations is the occurrence of a non-constant \emph{mean-field mass} in addition to the classical mean-field force. Dealing with the mean-field mass is the primary difference compared to standard procedures for proving the mean-field limit.
	
	Our main results are the proof of well-posedness and a stability estimate.
	Under some assumptions, we show well-posedness of the ODE model and the characteristic flow of the mean-field PDE.
	To obtain sufficient Lipschitz bounds, we use conservation of total energy.
	To show the mean-field limit, we prove a generalisation of Dobrushin's stability estimate \cite[Section 1.4]{GolseDynamicsLargeParticle2016}, which implies
	\begin{align*}
		W_1(\mu^t_1,\mu^t_2) &+ \Vert \Bfy_1 + \Bfy_2 \Vert + \Vert \dot{\Bfy}_1 + \dot{\Bfy}_2 \Vert \\
		&\leq C e^{Lt} \big( W_1(\init \mu_1,\init \mu_2) + \Vert \init{\Bfy}_1 - \init \Bfy_2 \Vert + \Vert \init{\Bfv}_1 - \init{\Bfv}_2 \Vert \big)
	\end{align*}
	where $W_1$ is the Monge-Kantorovich distance with exponent $1$ (also called Wasserstein distance), $\init \mu_1, \init \mu_2$ are two different initial particle distributions
	and $\mu^t_1, \mu^t_2$ are weak solutions of the mean-field PDE.
	The stability estimate implies convergence in the mean-field limit and provides a rate of convergence as well.

	This article generalises the results from our previous work \cite{PlunderCoupledSystemsLinear2020}.
	Instead of assuming that the constraints and forces are linear, we allow nonlinear constraints and forces.
	This is motivated by the application to models for muscle tissue \cite[Section 5.2]{PlunderCoupledSystemsLinear2020}.
	However, the nonlinear constraints introduce two new challenges.
	First, the position space becomes nonlinear, which prevents many simplifications.
	Second, the mean-field mass is non-constant in contrast to the linear case. In the context of the Newton equations of the macroscopic component, this implies that we need sufficient bounds for the inverse of the mean-field mass.
	
	We want to point out that even with the extension to nonlinear constraints, the applicability of particle systems with uniform, full-rank constraints remains limited. Practical applications would require further extensions, which are beyond the scope of this article. For example, in the context of muscle tissue models, a necessary extension would be to allow particles to switch between a constrained and an unconstrained state. With such a mechanism, one could model the essential driver for muscle contraction, which is the attachment and detachment of actin-myosin filaments, see  \cite[Section 5.3]{PlunderCoupledSystemsLinear2020}. For the case of unconstrained particles, related mean-field limit results already exist  \cite{DarlingDifferentialEquationApproximations2008, DavisPiecewiseDeterministicMarkov1984}, but these results do not directly apply to constrained particles.
	
	This article is organised as follows:
	\Cref{sec:discrete_model} defines the discrete particle model as a DAE model and an equivalent ODE model. It shows conservation of energy and well-posedness for the ODE model.
	\Cref{sec:mean_field} starts with a formal derivation of the mean-field equations for the ODE model.
	It continues with showing that the mean-field equations are consistent with the ODE model, preserve energy and are well-posed. As a main result, it proves a stability estimate that implies convergence of the mean-field limit.
	\Cref{sec:appendix} contains the proof of the equivalence of the DAE model and the ODE model.

	\clearpage

	\section{The discrete particle model with uniform full-rank constraints}
	\label{sec:discrete_model}
	This section describes the discrete particle model with uniform, full-rank constraints. We start with a DAE model and derive an equivalent ODE model for the particle dynamics. Then, we prove conservation of energy and well-posedness for the ODE model.
	
	The derivation of an equivalent ODE model has the following advantage. If we apply the mean-field limit directly to the DAE model, the resulting mean-field equation would be a partial differential-algebraic equation (PDAE). This would be an unnecessary complication. Using the ODE model avoids this issue. By exploiting the particular structure of the DAE, we can eliminate the constraints and their Lagrangian multipliers. The resulting equations will be an equivalent ODE model which yields a PDE in the mean-field limit.

	\subsection{The DAE model}
	\label{sec:dae_model}
	
	Let $\BfCalXN \coloneqq (\BfX_1,\dots,\BfX_N) \in (\RR^{n_x})^N$ denote the positions of $N$ identical, non-interacting, microscopic particles and let $\Bfy \in \RR^{n_y}$ denote the state of a macroscopic component.
	We denote the internal forces of both systems as
	\begin{align*}
		F_0(\Bfy) \coloneqq -\nabla_{\Bfy} \Cal W_0(\Bfy)
		\quad
		\text{and}
		\quad
		F_1(\BfXi) \coloneqq -\nabla_{\BfXi} \Cal W_1(\BfXi) \fai
	\end{align*}
	where $\Cal W_0 \in C^1(\Rny,\RR)$ and $\Cal W_1 \in C^1(\Rnx,\RR)$ are the potential energies of the macroscopic component and the microscopic particles.
	
	The central connection between the particles and the macroscopic component are 
	the constraints
	\begin{align*}
		g(\BfXi(t), \Bfy(t)) = g(\init \BfXi, \init \Bfy) \fai \text{~and~} t \geq 0
	\end{align*}
	where $g \in C^2(\Rnx \times \Rny,\Rnx)$ is the constraint function and $\init \BfXi \in \Rnx, \init \Bfy \in \Rny$ are the initial conditions.
	We call these constraints uniform since all particles have to remain in the level set of the same constraint function.
	Moreover, we assume that the constraint function has full-rank, i.e.
	\begin{align}
		\nabla_\BfX g(\BfX, \Bfy) \quad \text{is invertable for all} ~ \BfX \in \Rnx, \Bfy \in \Rny.
	\end{align}
	This implies that the state of the macroscopic component determines locally the position of all particles.
	
	Now, we can state the discrete particle model as a constrained Newton equation
	\begin{subequations}
	\begin{empheq}[left={\empheqbiglbrace~}]{align}
			\label{eq:x_micro_ind3}
			m \ddot{\BfX}_i = F_1(\BfX_i) - \nabla_{\BfX_i} g(\BfX_i,\Bfy) \Bf\lambda_i 
			& \quad \fai,
			\\
			\label{eq:y_micro_ind3}
			\ddot \Bfy = F_0(\Bfy) - \frac{1}{N} \sum_{j=1}^N \nabla_{\Bfy} g(\BfX_j,\Bfy) \Bf\lambda_j , &
			\\
			\label{eq:g_micro_ind3}
			g(\BfX_i,\Bfy) = g(\init \BfX_i, \init \Bfy)
			& \quad \fai
	\end{empheq}
	\end{subequations}
	with the initial conditions
	\begin{align}
		\label{eq:micro_init_data}
		\BfXi(0) = \init \BfXi, \quad
		\Bfy(0) = \init  \Bfy
		\quad
		\text{and}
		\quad
		\dot \Bfy(0) = \init \Bfv
		\fai.
	\end{align}
	The equations \crefDAE form a system of differential-algebraic equations (DAEs). Therefore, we will refer to it as the \textit{DAE model}.	
	The algebraic variables $\Bf \lambda_1, \dots, \Bf \lambda_N \in \RR^{n_x}$ are the Lagrangian multipliers for \cref{eq:g_micro_ind3}, and their initial value is determined implicitly by the system.
	
	The main objective of this article is to study the limit $N \to \infty$ for this system.
	The scaling factor $\frac{1}{N}$ represents the mean-field scaling and ensures that the total energy remains bounded in the limit $N \to \infty$. This factor can be derived by the choice of an appropriate time-scale, see \cite[Section 1.2]{GolseDynamicsLargeParticle2016}.

\subsection{The ODE model}
	\label{sec:ode_model}
	The derivation of an equivalent ODE model is elementary but lengthy. Therefore, we just present the results here, whereas the full computations can be found in \cref{sec:appendix}. 
	
	Due to the full-rank constraint, the velocity of the macroscopic component $\dot \Bfy$ determines the velocity of each particle $\dot \BfXi$, namely
	\begin{align}
		g = \text{const.} \Leftrightarrow \frac{\dif g}{\dif t} = 0 \Leftrightarrow \quad \dot \BfXi = \Phi_i[ \dot \Bfy]
		\label{eq:Xdot_ode_tmp}
	\end{align}
	where \[
	\Phi_i \coloneqq
	\Phi(\BfXi,\Bfy) 
	\coloneqq
	-\left( \DifXj g(\BfXi,\Bfy) \right)^{-1} 
	\Dify g(\BfXi,\Bfy) \in \RR^{n_x \times n_y}.
	\]
	
	Consequentially, 
	\begin{align}
		\frac{\dif g}{\dif t} = 0 \Rightarrow \frac{\dif^2 g}{\dif t^2} = 0  \Leftrightarrow
		 \quad  \ddot \BfXi = \Phi_i[ \ddot \Bfy ] + \Omega_i [\dot \Bfy, \dot \Bfy]
		 \label{eq:Xddot_ode_tmp}
	\end{align}
	where	
	\begin{align*}
		\Omega_i[\Bf v, \Bf w] 
	&\coloneqq \Omega(\BfXi, \Bfy)[\Bf v, \Bf w]
	\notag \\
	&\coloneqq \DifXi \Phi(\BfXi,\Bfy)[ \Bf v, \Phi(\BfXi,\Bfy)[ \Bf w ] ] + \Dify \Phi(\BfXi,\Bfy)[ \Bf v, \Bf w]
\end{align*}
for $\Bf v, \Bf w \in \Rny$. We remark that taking derivatives of the algebraic equation $g = 0$ is related to the concept of index reduction \cite[Chap VII]{HairerSolvingOrdinaryDifferential2010}.
	
	With the relations \cref{eq:Xdot_ode_tmp,eq:Xddot_ode_tmp}, we can solve \cref{eq:x_micro_ind3} for the Lagrangian multipliers and then eliminate both $\Bf \lambda_i$ and $\ddot \BfXi$ from the system \crefDAE to obtain an equivalent ODE model
	\begin{subequations}
		\begin{empheq}[left={\empheqbiglbrace~}]{align}
			\effN{m} \ddot \Bfy &= \effN{F}, 
			\label{eq:y_micro_ode}\\
			\dot \BfXi &= \Phi_i [ \dot \Bfy ], \fai
			\label{eq:x_micro_ode}
		\end{empheq}
	\end{subequations}
	with initial condtions
	\begin{align}
		\Bfy(0) = \init \Bfy,
		\quad
		\dot \Bfy(0) = \init \Bfv
		\quad \text{and} \quad
		\BfXi(0) = \init \BfXi \fai
	\end{align}
	where the effective mass and effective force are
	\begin{align}
		\effN{m}(\BfCalXN, \Bfy) &= (I_{\Rny} + \frac{m}{N} \sum_{j=1}^N \Phi_j^T \Phi_j ), 
		\label{eq:def_effNm}\\
		\effN{F}(\BfCalXN, \Bfy, \dot \Bfy) &= 
		F_0(\Bfy) 
		+ 
		\frac{1}{N} \sum_{j=1}^N \Phi_j^T  (   
		F_1(\BfXj) - m \Omega_j [ \dot \Bfy , \dot \Bfy ])
		\label{eq:def_effNF}.
	\end{align}
	The super-index ${}^{(N)}$ identifies the number of particles. For a detailed derivation of \crefODE, we refer to \cref{sec:appendix}.

	The system \crefODE is a reduced form of the DAE model \crefDAE. In the following, we will call \crefODE the \emph{ODE model}. 
	We notice that \cref{eq:y_micro_ode} is in the form of Newton's second law. Therefore, we identify $\effN{m}$ as the effective mass of the macroscopic component and $\effN{F}$ as the effective force.

	The equations \cref{eq:def_effNm,eq:def_effNF} fit well into the setting of mean-field theory since both terms are mean values of the individual influence of all particles, i.e.
	\begin{align}
		\effN{m}(\BfCalXN, \Bfy) &= \frac{1}{N} \sum_{j=1}^N \effO{m}(\BfXj,\Bfy), \\
		\effN{F}(\BfCalXN, \Bfy, \dot \Bfy) &= \frac{1}{N} \sum_{j=1}^N \effO{F}(\BfXj,\Bfy, \dot \Bfy)
	\end{align}

	Every solution of the DAE model \crefDAE also solves the ODE model \crefODE, and with solutions of the ODE model we can recover the Lagrangian mutlipliers to construct a solution of the DAE model. The next proposition formalises this equivalence.
	
	\begin{proposition}
		\label{prop:ode_dae}
		Let $g \in C^2(\Rnx \times \Rny, \Rnx)$ be a constraint function such that
		\begin{align}
			\label{eq:prop_g_inv}
		\nabla_{\BfX} g(\BfX,\Bfy)) \text{~is invertable for all~} \BfX \in \Rnx, \Bfy \in \Rny.
		\end{align}
		
		If 
		\begin{align*}
			\Bfy \in C^2([0,T],\Rny), \quad
			\BfXi \in C^1([0,T],\Rnx) \fai
		\end{align*}
		is a solution of the ODE model \crefODE,
		then $\Bf\lambda_i : [0,T] \to \Rnx$ defined by \cref{eq:app_lambda_def} is a continuous function and $(\Bfy, \BfX_1,\dots,\BfX_N,\Bf\lambda_1,\dots,\Bf\lambda_N)$ is a solution of the DAE model \crefDAE.
		
		Vice versa, if 
		\begin{align*}
			\Bfy \in C^2([0,T],\Rny), \quad
			\BfXi \in C^2([0,T],\Rnx), \quad
			\Bf\lambda_i \in C^0([0,T],\Rnx) \fai 
		\end{align*} 
		is a solution of the DAE model \crefDAE, then $(\Bfy, \BfX_1,\dots,\BfX_N)$ is a solution of the ODE model \crefODE.
	\end{proposition}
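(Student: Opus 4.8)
The plan is to prove both implications with a single mechanism: time-differentiation of the algebraic constraint \cref{eq:g_micro_ind3} together with the full-rank hypothesis \cref{eq:prop_g_inv}, which renders every manipulation reversible. Invertibility of $\nabla_{\BfX}g$ is used twice — once to solve the differentiated constraint for the particle velocities, and once to solve the particle Newton equation \cref{eq:x_micro_ind3} for the multiplier $\Bf\lambda_i$. The two directions of the equivalence are then just the forward and backward readings of the same chain of identities, so the bulk of the work is algebraic.

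For DAE $\Rightarrow$ ODE, I would differentiate $g(\BfXi,\Bfy)=\mathrm{const.}$ once; since $\nabla_{\BfX_i}g$ is invertible this yields uniquely $\dot\BfXi=\Phi_i[\dot\Bfy]$, i.e. \cref{eq:x_micro_ode}, and differentiating a second time gives \cref{eq:Xddot_ode_tmp}. Substituting that expression for $\ddot\BfXi$ into \cref{eq:x_micro_ind3} and applying $(\nabla_{\BfX_i}g)^{-1}$ produces the explicit formula \cref{eq:app_lambda_def}, namely $\Bf\lambda_i=(\nabla_{\BfX_i}g)^{-1}\big(F_1(\BfXi)-m\Phi_i[\ddot\Bfy]-m\Omega_i[\dot\Bfy,\dot\Bfy]\big)$. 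Inserting this into the macroscopic equation \cref{eq:y_micro_ind3}, using the identity $\nabla_{\Bfy}g\,(\nabla_{\BfX_i}g)^{-1}=-\Phi_i^{T}$ (immediate from the definition of $\Phi_i$), and collecting the $\ddot\Bfy$-terms on the left-hand side, I obtain exactly $\effN{m}\ddot\Bfy=\effN{F}$ with $\effN{m},\effN{F}$ as in \cref{eq:def_effNm,eq:def_effNF}. This is \cref{eq:y_micro_ode}, so the ODE model holds.

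For the converse ODE $\Rightarrow$ DAE, I would take \cref{eq:app_lambda_def} as the definition of $\Bf\lambda_i$ and verify the three DAE equations in turn. The constraint \cref{eq:g_micro_ind3} follows because $\tfrac{\dif}{\dif t}g(\BfXi,\Bfy)=\nabla_{\BfX_i}g\,\Phi_i[\dot\Bfy]+\nabla_{\Bfy}g\,\dot\Bfy$ vanishes by the definition of $\Phi_i$, so $g(\BfXi(\cdot),\Bfy(\cdot))$ is constant and equals its initial value. Equation \cref{eq:x_micro_ind3} then holds by construction, since substituting \cref{eq:app_lambda_def} into $F_1(\BfXi)-\nabla_{\BfX_i}g\,\Bf\lambda_i$ cancels the force term and returns $m\big(\Phi_i[\ddot\Bfy]+\Omega_i[\dot\Bfy,\dot\Bfy]\big)=m\ddot\BfXi$. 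Finally \cref{eq:y_micro_ind3} is recovered by reading the computation of the first part backwards: $\effN{m}\ddot\Bfy=\effN{F}$ rearranges to $\ddot\Bfy=F_0(\Bfy)-\tfrac1N\sum_j\nabla_{\Bfy}g\,\Bf\lambda_j$.

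The step I expect to be the genuine obstacle is the regularity bookkeeping in the converse direction. To even write $\ddot\BfXi$ in \cref{eq:x_micro_ind3} I must first upgrade the assumed regularity $\BfXi\in C^1$ to $C^2$: because $g\in C^2$ the map $\Phi$ is $C^1$, so $\Phi_i=\Phi(\BfXi,\Bfy)$ is $C^1$ in $t$, and since $\dot\Bfy\in C^1$ the relation $\dot\BfXi=\Phi_i[\dot\Bfy]$ forces $\dot\BfXi\in C^1$, that is $\BfXi\in C^2$; only then is the second differentiation leading to \cref{eq:Xddot_ode_tmp} legitimate. The same degree count, combined with continuity of matrix inversion guaranteed by \cref{eq:prop_g_inv}, shows that each factor in \cref{eq:app_lambda_def} ($F_1$, $\Phi_i$, $\Omega_i$, $\ddot\Bfy$, and $(\nabla_{\BfX_i}g)^{-1}$) is continuous, which yields the asserted continuity of $\Bf\lambda_i$. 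Apart from this, the only thing requiring care is the consistent use of the transpose convention behind $\nabla_{\Bfy}g\,(\nabla_{\BfX_i}g)^{-1}=-\Phi_i^{T}$, so that the eliminated system matches \cref{eq:def_effNm,eq:def_effNF} precisely.
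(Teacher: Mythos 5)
Your proposal is correct and takes essentially the same route as the paper's proof in \cref{sec:appendix}: differentiating the constraint and using invertibility of $\nabla_{\BfX} g$ to pass between \cref{eq:g_micro_ind3} and \cref{eq:x_micro_ode}, eliminating respectively reconstructing the multipliers via the explicit formula \cref{eq:app_lambda_def} (with the same transpose identity $\nabla_{\Bfy} g (\nabla_{\BfX} g)^{-1} = -\Phi_i^T$ that the paper uses implicitly), and the same $C^1$-to-$C^2$ regularity upgrade for $\BfXi$. The only difference is organisational: the paper structures the argument as two equivalences (constraint $\Leftrightarrow$ velocity relation, then Newton equations $\Leftrightarrow$ effective equation), whereas you write out the two implications directly.
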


	The proof is given in \cref{sec:appendix}.
	
	\subsection{Assumptions}
	\label{sec:assumptions}
	In the following, we will collect sufficient assumptions for well-posedness of the ODE model \crefODE and for convergence in the mean-field limit.
		
	For the terms of the ODE model \crefODE we assume:
	\begin{enumerate}
		\item The forces $F_0 : \Rny \to \Rny$ and $F_1 : \Rnx \to \Rnx$ are Lipschitz continuous and bounded, i.e. there exists constants $L_0, L_1, M_0, M_1 \geq 0$ such that 
		\begin{align}
			\label{ass:L_F0} \tag{A1a}
			\Vert F_0(\Bfy) - F_0(\Bfy') \Vert 
			&\leq
			L_0 \Vert \Bfy - \Bfy' \Vert,\\
			\label{ass:M_F0} \tag{A1b}
			\Vert F_0(\Bfy) \Vert 
			&\leq M_0, \\
			\label{ass:L_F1} \tag{A1c}
			\Vert F_1(\BfX) - F_1(\BfX') \Vert 
			&\leq
			L_1 \Vert \BfX - \BfX'\Vert, \\
			\label{ass:M_F1} \tag{A1d}
			\Vert F_1(\BfX) \Vert 
			&\leq M_1.
		\end{align}
		for all $\BfX, \BfX'\in \Rnx$, $\Bfy, \Bfy'\in \Rny$.
		
		\item The constraint 
		\begin{align}
			g : \Rnx \times \Rny \to \Rnx \text{~is twice continuously differentiable}
			\label{ass:g_C2} \tag{A2a}
		\end{align}
		 and
		\begin{align}
			\label{ass:g_invertible} \tag{A2b}
			\DifX g(\BfX, \Bfy) \text{~ is invertible}
		\end{align}
		for all $\BfX \in \Rnx$ and $\Bfy \in \Rny$.
		\item The map 
		\begin{align*}
			\Phi 
			: \Rnx \times \Rny \to \RR^{n_x \times n_y}
			: (\BfX, \Bfy) \mapsto -\left(\DifX g(\BfX,\Bfy)\right)^{-1} \Dify g(\BfX,\Bfy)
		\end{align*}
		is Lipschitz continuous and bounded, i.e. there are constants $L_\Phi, M_\Phi \geq 0$ such that
		\begin{align}
			\label{ass:L_phi} \tag{A3a}
			\Vert \Phi(\BfX,\Bfy) - \Phi(\BfX', \Bfy') \Vert 
			&\leq
			L_\Phi \left( \Vert \BfX - \BfX'\Vert + \Vert \Bfy - \Bfy' \Vert \right) \\
			\label{ass:M_phi} \tag{A3b}
			\Vert \Phi(\BfX, \Bfy) \Vert 
			&\leq M_\Phi.
		\end{align}
		for all $\BfX, \BfX'\in \Rnx$ and $\Bfy, \Bfy'\in \Rny$.
		\item The map 
		\begin{align*}
			\Omega &: \Rnx \times \Rny \to \RR^{n_x \times n_y \times n_y} \\ 
			&:
			(\BfX, \Bfy) \mapsto \DifX \Phi(\BfX, \Bfy) \cdot \Phi(\BfX,\Bfy) + \Dify \Phi(\BfX,\Bfy)
		\end{align*}
		is Lipschitz continuous and bounded, i.e. there exist constants $L_\Omega, M_\Omega \geq 0$ such that
		\begin{align}
			\label{ass:L_omega} \tag{A4a}
			\Vert \Omega(\BfX,\Bfy) - \Omega(\BfX', \Bfy') \Vert 
			&\leq
			L_\Omega \left( \Vert \BfX - \BfX'\Vert + \Vert \Bfy - \Bfy' \Vert \right) \\
			\label{ass:M_omega} \tag{A4b}
			\Vert \Omega(\BfX, \Bfy) \Vert 
			&\leq M_\Omega
		\end{align}
		for all $\BfX, \BfX'\in \Rnx$ and $\Bfy, \Bfy'\in \Rny$.
	\end{enumerate}

	\begin{remark}[Simplified assumptions for the constraint]
		Checking the assumptions \crefrange{ass:L_phi}{ass:M_omega} is a tedious task.
		A variant of \cref{lem:m_inv_f} yields the following stronger  but simplified assumptions.
		
		If $g$ and its first and second derivatives are bounded, Lipschitz continuous and $\nabla_\BfX g$ is uniformly elliptic, i.e. there exist a constant $\delta > 0$ such that
		\begin{align}
		v^T \nabla_\BfX g(\BfX,\Bfy) v \geq \delta \Vert v \Vert^2
		\quad 
		\text{for all~} v \in \Rnx \text{~and~} (\BfX,\Bfy) \in \Rnx \times \Rny,
		\end{align}
		then, $g$ satisfies \crefrange{ass:L_phi}{ass:M_omega}.
		
	\end{remark}
		
	\subsection{Well-posedness of the ODE model}
	
	The assumptions \crefrange{ass:L_F0}{ass:M_omega} are chosen such that the ODE model \crefODE is globally Lipschitz. To show this, the main task is to prove that
	${\effN{m}}^{-1} \effN{F}$ 
	is Lipschitz continuous. We will do this in two steps.
	
	First, we show in \cref{lem:ODE_conserves_energy} that for given initial conditions, there exists an upper bound for the speed of the macroscopic component, i.e.
	\[
	\Vert \dot \Bfy(t) \Vert \leq M_v \quad \text{for all~} t \geq 0
	\]
	where $M_v > 0$ is a constant depending only on the initial conditions. 
	Then, we can show that 	$\effN{m}$ and $\effN{F}$ are bounded and Lipschitz continuous.
	
	In the second step, we use that $\effN{m}$ is also symmetric and uniformly elliptic to show that the product ${\effN{m}}^{-1} \effN{F}$ is Lipschitz continuous as well.
	
	Finally, the Picard-Lindelöf Theorem implies well-posedness of \crefODE.
	
	\begin{lemma}[Conservation of energy]
		\label{lem:ODE_conserves_energy}
		For $T > 0$, let $(\BfCalXN,\Bfy)$ be a solution of \crefODE with $\Bfy \in C^2([0,T],\Rny)$ and $\BfCalXN \in C^1([0,T],(\Rnx)^N)$.
		Then, the total energy
		\begin{align}
			\label{eq:ode_total_energy}
			E = 
			\frac{1}{2} \Vert \dot \Bfy \Vert^2  
			+ \mathcal{W}_0(\Bfy)
			+ \frac{1}{N} \sum_{i=1}^N 
			\left(
			\frac{m}{2} \Vert \dot \BfXi \Vert^2 + \mathcal{W}_1(\BfX_i) 
			\right)
		\end{align}
		is a conserved quantity. In particular, 
		\begin{align}
			\Vert \dot \Bfy(t) \Vert \leq 2 \left( E(\init \BfCalX, \init \Bfy, \init \Bfv) \right)^2
			\quad  \text{for all~} t \in [0,T].
			\label{eq:v_bounded}
		\end{align}
	\end{lemma}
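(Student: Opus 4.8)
The plan is to show that the total energy $E$ from \cref{eq:ode_total_energy} is constant by differentiating it along a solution of \crefODE and verifying that $\tfrac{\dif}{\dif t}E \equiv 0$; the speed bound \cref{eq:v_bounded} then drops out by discarding the nonnegative terms. Before differentiating I would record a short regularity bootstrap: the hypotheses only give $\BfCalXN \in C^1$, but the relation $\dot\BfXi = \Phi(\BfXi,\Bfy)[\dot\Bfy]$ has a $C^1$ right-hand side, since $g\in C^2$ makes $\Phi$ continuously differentiable and $\Bfy\in C^2$; hence each $\BfXi$ is in fact $C^2$ and $E$ is continuously differentiable. Using $F_0 = -\nabla_\Bfy\Cal W_0$ and $F_1 = -\nabla_{\BfXi}\Cal W_1$, the chain rule gives
\[
\tfrac{\dif}{\dif t}E = \dot\Bfy\cdot\ddot\Bfy - F_0(\Bfy)\cdot\dot\Bfy + \tfrac{1}{N}\sum_{i=1}^N\big(m\,\dot\BfXi\cdot\ddot\BfXi - F_1(\BfXi)\cdot\dot\BfXi\big).
\]

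Next I would substitute the kinematic relations $\dot\BfXi = \Phi_i[\dot\Bfy]$ and $\ddot\BfXi = \Phi_i[\ddot\Bfy] + \Omega_i[\dot\Bfy,\dot\Bfy]$ from \cref{eq:Xddot_ode_tmp}, turning each particle term into $m\,\dot\Bfy^T\Phi_i^T\Phi_i\ddot\Bfy + m\,\dot\Bfy^T\Phi_i^T\Omega_i[\dot\Bfy,\dot\Bfy] - \dot\Bfy\cdot\Phi_i^T F_1(\BfXi)$. The crucial step is to collect the coefficients of $\ddot\Bfy$: together with $\dot\Bfy\cdot\ddot\Bfy$ they assemble into $\dot\Bfy^T\big(I_{\Rny} + \tfrac{m}{N}\sum_i\Phi_i^T\Phi_i\big)\ddot\Bfy = \dot\Bfy^T\effN{m}\,\ddot\Bfy$ by \cref{eq:def_effNm}. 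Invoking the equation of motion \cref{eq:y_micro_ode}, namely $\effN{m}\ddot\Bfy = \effN{F}$, this equals $\dot\Bfy\cdot\effN{F}$, and expanding $\effN{F}$ via \cref{eq:def_effNF} yields $F_0(\Bfy)\cdot\dot\Bfy + \tfrac1N\sum_i\dot\Bfy\cdot\Phi_i^T F_1(\BfXi) - \tfrac{m}{N}\sum_i\dot\Bfy\cdot\Phi_i^T\Omega_i[\dot\Bfy,\dot\Bfy]$. These three terms cancel exactly against the remaining contributions $-F_0(\Bfy)\cdot\dot\Bfy + \tfrac{m}{N}\sum_i\dot\Bfy^T\Phi_i^T\Omega_i[\dot\Bfy,\dot\Bfy] - \tfrac1N\sum_i\dot\Bfy\cdot\Phi_i^T F_1(\BfXi)$, so that $\tfrac{\dif}{\dif t}E = 0$.

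A cleaner, essentially equivalent route passes through the DAE model: by \cref{prop:ode_dae} a solution of \crefODE lifts to a solution of \crefDAE with continuous multipliers $\Bf\lambda_i$, and repeating the differentiation there leaves only $-\tfrac1N\sum_i\big((\Dify g(\BfXi,\Bfy))\dot\Bfy + (\DifXi g(\BfXi,\Bfy))\dot\BfXi\big)\cdot\Bf\lambda_i$, which vanishes term by term because differentiating $g(\BfXi,\Bfy)=\mathrm{const}$ gives $(\DifXi g)\dot\BfXi + (\Dify g)\dot\Bfy = 0$. I expect the only real obstacle in either version to be the bookkeeping of the velocity-quadratic terms $\Omega_i[\dot\Bfy,\dot\Bfy]$ and the $\Phi_i^T\Phi_i$ blocks, i.e.\ confirming that the effective force in \cref{eq:def_effNF} is precisely the workless combination dictated by the constraint; the DAE viewpoint makes transparent why this must hold, since the constraint forces act orthogonally to the (tangent) velocities. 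Finally, for \cref{eq:v_bounded} I would combine conservation with the nonnegativity of the particle kinetic terms and of the potentials (after the usual normalisation making $\Cal W_0,\Cal W_1\geq 0$) to obtain $\tfrac12\Vert\dot\Bfy(t)\Vert^2 \leq E(\init\BfCalX,\init\Bfy,\init\Bfv)$, which gives a speed bound $M_v$ depending only on the initial data.
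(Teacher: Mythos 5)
Your proof is correct and takes essentially the same route as the paper: differentiate $E$, substitute $\dot \BfXi = \Phi_i[\dot \Bfy]$ and $\ddot \BfXi = \Phi_i[\ddot \Bfy] + \Omega_i[\dot \Bfy, \dot \Bfy]$, collect the acceleration terms into $\dot \Bfy^T \effN{m} \ddot \Bfy$, and cancel using $\effN{m} \ddot \Bfy = \effN{F}$; your DAE detour is a valid but redundant alternative. Two of your side remarks go beyond the paper's own proof and are worth keeping: the $C^2$-bootstrap for $\BfXi$ (the paper differentiates \cref{eq:x_micro_ode} without comment although the lemma only assumes $\BfCalXN \in C^1([0,T],(\Rnx)^N)$), and the bound $\Vert \dot \Bfy(t) \Vert \leq \sqrt{2E}$ under a lower bound on the potentials, which is the correct conclusion --- the paper's stated bound \cref{eq:v_bounded} with $2E^2$ neither follows from conservation when $E$ is small nor is addressed anywhere in its proof.
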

	
	\begin{proof}
		\label{proof:cons_energy_ode}
		
		We recall that the time derivative of \cref{eq:x_micro_ode} yields
		\begin{align}
			\label{eq:tmp_x_accel}
			\ddot \BfXi = \Phi_i[ \ddot \Bfy ] + \Omega_i[\dot \Bfy, \dot \Bfy].
		\end{align}
	
		Using \cref{eq:x_micro_ode,eq:tmp_x_accel,eq:y_micro_ode}, we compute the time derivative of the total energy, 
		\begin{align*}
			\dot E
			&= \dot \Bfy^T ( \ddot \Bfy - F_0(\Bfy) )
			+ \frac{1}{N} \sum_{i=1}^N \dot{\BfXi}^T ( m \ddot \BfXi - F_1(\BfXi) )
			\\
			&= \dot \Bfy^T ( \ddot \Bfy - F_0(\Bfy) )
			+ \dot \Bfy^T ( \frac{1}{N} \sum_{i=1}^N \Phi_i^T ( m (\Phi_i[ \ddot \Bfy ] + \Omega_i[ \dot \Bfy, \dot \Bfy ]) - F_1(\BfXi) ) )
			\\
			&= \dot \Bfy^T \left( 
			(1+ \frac{1}{N} \sum_{i=1}^N m\Phi_i^T \Phi_i) \ddot \Bfy
			- F_0(\Bfy) - \frac{1}{N} \sum_{i=1}^N \Phi_i^T (F_1(\BfXi) - m \Omega_i[\dot \Bfy, \dot \Bfy]) \right) \\
			&= \dot \Bfy^T ( \effN{m} \ddot \Bfy - \effN{F} )\\
			&= 0.			
		\end{align*}
	\end{proof}	
		
	 \cref{lem:ODE_conserves_energy} justifies the assumption that the velocity $\Bfv = \dot \Bfy$ is bounded, which is useful for the next lemma.
	\begin{lemma}
		\label{lem:effOM_effOF}
		For any constant $M_v \geq 0$, the maps
		\[
		(\BfX,\Bfy) \mapsto \effO{m}(\BfX,\Bfy)
		\quad \text{and} \quad
		(\BfX,\Bfy,\Bfv) \mapsto \effO{F}(\BfX,\Bfy,\Bfv)
		\]
		and
		\[
		(\BfCalXN,\Bfy) \mapsto \effN{m}(\BfCalXN,\Bfy)
		\quad \text{and} \quad
		(\BfCalXN,\Bfy,\Bfv) \mapsto \effN{F}(\BfCalXN,\Bfy,\Bfv)
		\]
		are bounded and Lipschitz continuous for all
		$\BfX \in \Rnx, \BfCalXN \in (\Rnx)^N$ and for all $\Bfy, \Bfv \in \Rny$ with $\Vert \Bfv \Vert \leq M_v$.
	\end{lemma}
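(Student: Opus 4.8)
The plan is to treat all four maps as elements of the class of bounded, Lipschitz continuous functions and to exploit that this class is closed under sums and (matrix) products. Since the $N$-particle quantities are averages of the single-particle ones,
\[
\effN{m}(\BfCalXN,\Bfy) = \frac{1}{N}\sum_{j=1}^N \effO{m}(\BfXj,\Bfy),
\qquad
\effN{F}(\BfCalXN,\Bfy,\Bfv) = \frac{1}{N}\sum_{j=1}^N \effO{F}(\BfXj,\Bfy,\Bfv),
\]
I would first establish boundedness and Lipschitz continuity for the single-particle maps $\effO{m}$ and $\effO{F}$, and then transfer the estimates to the averaged maps in a final step.

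The workhorse is the elementary fact that if $h_1,h_2$ are bounded by $b_1,b_2$ and Lipschitz with constants $\ell_1,\ell_2$, then their pointwise matrix product is bounded by $b_1 b_2$ and Lipschitz with constant $b_1\ell_2 + b_2\ell_1$ (add and subtract a mixed term). Applying this to $\effO{m}(\BfX,\Bfy) = I_{\Rny} + m\,\Phi(\BfX,\Bfy)^T\Phi(\BfX,\Bfy)$ gives boundedness from $\norm{\Phi}\le M_\Phi$ in \cref{ass:M_phi} and Lipschitz continuity from \cref{ass:L_phi}, with a constant of order $m M_\Phi L_\Phi$; this settles $\effO{m}$ entirely.

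The main obstacle is $\effO{F}(\BfX,\Bfy,\Bfv) = F_0(\Bfy) + \Phi(\BfX,\Bfy)^T\bigl(F_1(\BfX) - m\,\Omega(\BfX,\Bfy)[\Bfv,\Bfv]\bigr)$, because of the term $\Omega[\Bfv,\Bfv]$, which is \emph{quadratic} in $\Bfv$ and hence not globally Lipschitz in the velocity. This is precisely where the restriction $\norm{\Bfv}\le M_v$ (justified by the energy bound \cref{eq:v_bounded} of \cref{lem:ODE_conserves_energy}) is needed. On this ball the bilinear map is Lipschitz in $\Bfv$, since
\[
\Omega[\Bfv,\Bfv] - \Omega[\Bfv',\Bfv'] = \Omega[\Bfv-\Bfv',\Bfv] + \Omega[\Bfv',\Bfv-\Bfv'],
\]
so \cref{ass:M_omega} gives a Lipschitz constant $2 M_\Omega M_v$; jointly, $(\BfX,\Bfy)\mapsto\Omega(\BfX,\Bfy)[\Bfv,\Bfv]$ is Lipschitz with a constant of order $L_\Omega M_v^2$ by \cref{ass:L_omega}. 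Boundedness of $\effO{F}$ follows from \cref{ass:M_F0,ass:M_F1,ass:M_phi,ass:M_omega}, yielding $\norm{\effO{F}} \le M_0 + M_\Phi(M_1 + m M_\Omega M_v^2)$. Lipschitz continuity then follows by collecting the bounded and Lipschitz building blocks $F_0$, $\Phi^T$, $F_1$ and $\Bfv\mapsto\Omega[\Bfv,\Bfv]$ and applying the product rule above to $\Phi^T(\,\cdot\,)$; all constants are finite because $\Bfv$ is confined to a bounded set.

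Finally, I would transfer the estimates to the averaged maps. Boundedness is immediate, since an average of quantities sharing a common bound obeys that bound. For Lipschitz continuity, writing $L$ for the single-particle constant,
\[
\norm{\effN{m}(\BfCalXN,\Bfy) - \effN{m}(\tilde\BfCalXN,\tilde\Bfy)}
\le \frac{L}{N}\sum_{j=1}^N\norm{\BfXj-\tilde\BfXj} + L\,\norm{\Bfy-\tilde\Bfy},
\]
and analogously for $\effN{F}$. This is Lipschitz continuity with respect to the empirical-average ($\ell^1$-type) distance on $(\Rnx)^N$ that is natural to the mean-field setting; should the Euclidean product norm be preferred instead, Cauchy--Schwarz converts the average into the same bound up to a factor that I would track explicitly. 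The one point requiring care throughout is keeping the velocity confined to $\norm{\Bfv}\le M_v$, so that the quadratic term remains Lipschitz.
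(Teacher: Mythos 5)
Your proposal is correct and follows essentially the same route as the paper's proof: decompose $\effO{m}$ and $\effO{F}$ into sums and products of the bounded, Lipschitz building blocks $F_0$, $F_1$, $\Phi$, $\Omega$, use the restriction $\Vert \Bfv \Vert \leq M_v$ to tame the quadratic term $\Omega[\Bfv,\Bfv]$, and transfer to $\effN{m}$, $\effN{F}$ by averaging. You are in fact more explicit than the paper, which merely lists the relevant assumptions for each factor, whereas you spell out the product rule constants, the bilinear splitting $\Omega[\Bfv,\Bfv]-\Omega[\Bfv',\Bfv'] = \Omega[\Bfv-\Bfv',\Bfv]+\Omega[\Bfv',\Bfv-\Bfv']$, and the choice of norm on $(\Rnx)^N$.
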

	
	\begin{proof}
		
		In the following, we list which assumptions imply boundedness and Lipschitz continuity for the terms of $\mfO{m}, \mfO{F}$. We recall
		\begin{align*} 
		\effO{m}(\BfX,\Bfy) 
			&= I_\Rny + m \Phi(\BfX,\Bfy)^T \Phi(\BfX,\Bfy), \\
		\effO{F}(\BfX,\Bfy,\Bfv)
			&= F_0(\Bfy) + \Phi(\BfX,\Bfy)^T F_1(\BfX) + m \Omega(\BfXi,\Bfy)[\Bfv,\Bfv]. \\
		\end{align*}
		We use the general fact that if $h_1, h_2$ are both bounded and Lipschitz continuous, then the product $h_1 h_2$ is also bounded and Lipschitz continuous.
		
		\begin{itemize}
			\item $\Phi$ is bounded and Lipschitz by \cref{ass:L_phi,ass:M_phi}, therefore, $m \Phi^T \Phi$ is bounded and Lipschitz as well.
			\item $\Phi$ and $F_1$ are bounded and Lipschitz continuous by \cref{ass:L_F1,ass:M_F1,ass:L_phi,ass:M_phi}, therefore $\Phi(\BfX,\Bfy)^T F_1(\BfX)$ is bounded and Lipschitz.
			\item $\Phi$ and $\Omega$ are bounded and Lipschitz continuous by \cref{ass:L_phi,ass:M_phi,ass:L_omega,ass:M_omega}, therefore, $m \Omega(\BfXi,\Bfy)[\Bfv,\Bfv]$ is bounded and Lipschitz continuous provided $\Vert \Bfv \Vert \leq M_v$.
			\item $F_0$ and $F_1$ are bounded and Lipschitz by \cref{ass:L_F0,ass:M_F0,ass:L_F1,ass:M_F1}.
		\end{itemize}
		The statement for $\effN{m}$ and $\effN{F}$ follows since they are the mean of the bounded and Lipschitz continuous functions $\mfO{m}$ and $\mfO{F}$.
	\end{proof}
	
	Next, we show that ${\effN{m}}^{-1} \effN{F}$ is bounded and Lipschitz continuous. For this task, we prove a slightly more general lemma, which we can reuse later to show well-posedness of the mean-field PDE.
	We introduce the following notation. Let $(M,d_M)$ be a complete metric space, $n \in \mathbb{N}$ and $h: M \to \RR^n$, then we define
	\[
	\Op{Lip}(h) \coloneqq \sup_{\substack{a, b \in M\\ a \neq b}} \frac{\Vert h(a) - h(b) \Vert}{d_M(a, b)}.
	\]
	\newcommand{\Lip}{\mathrm{Lip}}
	
	\begin{lemma}
		\label{lem:m_inv_f}
		Let $(M, d_M)$ be a complete metric space and $n \in \mathbb{N}$. Let $m : M \to \RR^{n \times n}$ and $h: M \to \RR^n$ be bounded and Lipschitz continuous. Moreover, let $m(z)$ be symmetric and uniformly elliptic for all $z \in M$, i.e. there exists a positive constant $\delta_m > 0$ such that $v^T m(z) v^T \geq \delta_m \Vert v \Vert^2$ for all $z, v \in \RR^{n}$ . 
		
		Then, the map
		\[
		M \to \RR^n : z \mapsto  (m(z))^{-1} h(z)
		\]  
		is Lipschitz continuous with Lipschitz constant $\delta_m^{-2} \mathrm{Lip}(m) \Vert h \Vert_\infty + \Vert m \Vert_\infty \mathrm{Lip}(h)$.
	\end{lemma}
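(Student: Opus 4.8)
The plan is to prove Lipschitz continuity of $z \mapsto (m(z))^{-1} h(z)$ by the standard add-and-subtract decomposition, separating the variation of $h$ from the variation of the inverse mass $m^{-1}$. Fix $a, b \in M$ and write
\[
(m(a))^{-1} h(a) - (m(b))^{-1} h(b)
= (m(a))^{-1}\big(h(a) - h(b)\big) + \big((m(a))^{-1} - (m(b))^{-1}\big) h(b).
\]
The first summand measures how much $h$ moves, the second how much the inverse mass moves, and I would estimate each separately.

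The first preparatory step is a uniform bound on the inverse. Since $m(z)$ is symmetric and uniformly elliptic with constant $\delta_m$, its smallest eigenvalue is at least $\delta_m$, so $\Vert (m(z))^{-1} \Vert \leq \delta_m^{-1}$ for every $z \in M$. This is precisely where symmetry together with uniform ellipticity is used: it yields a bound on $(m(z))^{-1}$ that is uniform in $z$, so no compactness or continuity of $M$ is required. Combined with $\Vert h(a) - h(b) \Vert \leq \Lip(h)\, d_M(a,b)$, the first summand is bounded by $\delta_m^{-1} \Lip(h)\, d_M(a,b)$.

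The key step, and the only nonroutine one, is controlling the difference of the two inverses. For this I would invoke the resolvent-type identity
\[
(m(a))^{-1} - (m(b))^{-1} = (m(a))^{-1}\big(m(b) - m(a)\big)(m(b))^{-1},
\]
which holds for any two invertible matrices. Taking norms, using the uniform bound $\Vert(m(\cdot))^{-1}\Vert \le \delta_m^{-1}$ twice and $\Vert m(b) - m(a) \Vert \leq \Lip(m)\, d_M(a,b)$, gives $\Vert (m(a))^{-1} - (m(b))^{-1} \Vert \leq \delta_m^{-2} \Lip(m)\, d_M(a,b)$. Multiplying by $\Vert h(b) \Vert \leq \Vert h \Vert_\infty$ bounds the second summand by $\delta_m^{-2} \Lip(m) \Vert h \Vert_\infty\, d_M(a,b)$.

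Adding the two estimates yields the desired Lipschitz bound, with constant $\delta_m^{-2} \Lip(m) \Vert h \Vert_\infty + \delta_m^{-1} \Lip(h)$. The main obstacle is the difference-of-inverses term; once the algebraic identity above is used, everything reduces to the uniform ellipticity bound and the given Lipschitz constants of $m$ and $h$, so no further machinery is needed.
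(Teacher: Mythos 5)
Your proof is correct, and it reaches the same constant $\delta_m^{-2}\,\mathrm{Lip}(m)\,\Vert h \Vert_\infty + \delta_m^{-1}\,\mathrm{Lip}(h)$ as the paper's argument, but via a genuinely different route for the key step. The paper controls $\Vert (m(z_1))^{-1} - (m(z_2))^{-1} \Vert$ by differentiating the matrix-inversion map $\mathrm{Inv}$ on $GL(n)$, bounding $\Vert \partial\, \mathrm{Inv}(m(z)) \Vert \leq \delta_m^{-2}$, and invoking the mean value theorem; this requires the line segment between $m(z_1)$ and $m(z_2)$ to stay inside $GL(n)$, which the paper justifies somewhat loosely by a connected-component remark (what actually saves the argument is that convex combinations of symmetric positive definite matrices are positive definite). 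Your resolvent identity $(m(a))^{-1} - (m(b))^{-1} = (m(a))^{-1}\big(m(b)-m(a)\big)(m(b))^{-1}$ replaces this calculus step by pure algebra: it needs only invertibility at the two points $a,b$ themselves, so the connectedness/convexity subtlety never arises, making your argument both more elementary and slightly more robust (it would survive even without symmetry, as long as the uniform bound on the inverse holds). One further point worth noting: both your constant and the one the paper's proof actually produces differ from the constant displayed in the lemma statement, whose second term reads $\Vert m \Vert_\infty \mathrm{Lip}(h)$; this appears to be a typo for $\Vert m^{-1} \Vert_\infty \mathrm{Lip}(h) \leq \delta_m^{-1}\mathrm{Lip}(h)$, and your derivation confirms the corrected form.
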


	\begin{proof}
		For arbitrary $z \in M$, the ellipticity implies  that the lowest eigenvalue of $m(z)$ is bounded from below by $\delta_m > 0$. Hence, the inverse $(m(z))^{-1}$ exists, and its norm is bounded by
		\[
		\Vert (m(z))^{-1} \Vert \leq \delta_m^{-1}. 
		\]
		Next, we consider the map $\Inv : GL(n) \to GL(n)$ which has the derivative 
		\[
		\partial_z \Inv(m(z))[w] = -m(z)^{-1} w \, m(z)^{-1} \quad \text{for~} w \in \RR^{n \times n},
		\]
		which implies 
		\[
		\Vert \partial_z \mathrm{\Inv}(m(z)) \Vert \leq \delta_m^{-2}.
		\]		
		Now, we can show that $z \mapsto \Inv(m(z))$ is Lipschitz. For $z_1, z_2 \in M$, the matrices $m(z_1), m(z_2)$ have only positive eigenvalues. Hence, they are part of the same connected component of $GL(d)$.
		Therefore, the mean-value theorem is applicable and yields
		\begin{align*}
			\Vert \Inv(m(z_1)) - \Inv(m(z_2)) \Vert \leq \delta_m^{-2} \Vert m(z_1) - m(z_2) \Vert \leq \delta_m^{-2} \Lip(m) d_M(z_1, z_2).
		\end{align*}		
		The claim follows from the computation
		\begin{align*}
			\MoveEqLeft[4]
			\Vert \Inv(m(z_1)) h(z_1) - \Inv(m(z_2)) h(z_2) \Vert \\ 
			&\leq
			\Vert \Inv(m(z_1)) h(z_1) - \Inv(m(z_2)) h(z_1) \Vert 
			\\
			&\quad+
			\Vert \Inv(m(z_2)) h(z_1) - \Inv(m(z_2)) h(z_2) \Vert 
			\\
			&\leq
			\delta_m^{-2} \Lip(m) d_M(z_1, z_2) \Vert \Vert h \Vert 
			+
			\Vert \Inv(m(z_2)) \Vert \Lip(h) d_M(z_1, z_2). 
		\end{align*}		
	\end{proof}		
			
	\begin{lemma}
		\label{lem:effM_inv_effF_lipschitz}
		For any constant $M_v \geq 0$, the map
		\[
		(\BfCalXN,\Bfy,\Bfv) \mapsto \left( \effN{m}(\BfCalXN,\Bfy) \right)^{-1} \effN{F}(\BfCalXN,\Bfy,\Bfv)
		\]
		is Lipschitz continuous for all
		$\BfCalXN \in (\Rnx)^N$ and all $\Bfy, \Bfv \in \Rny$ with $\Vert \Bfv \Vert \leq M_v$.
	\end{lemma}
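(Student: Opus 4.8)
The plan is to apply \cref{lem:m_inv_f} verbatim, with the underlying complete metric space taken to be
\[
M \coloneqq (\Rnx)^N \times \Rny \times \overline{B_{M_v}},
\qquad
\overline{B_{M_v}} \coloneqq \{ \Bfv \in \Rny : \norm{\Bfv} \leq M_v \},
\]
equipped with the Euclidean metric. Since $\overline{B_{M_v}}$ is a closed ball, $M$ is a closed subset of a finite-dimensional Euclidean space and hence a complete metric space. For $z = (\BfCalXN, \Bfy, \Bfv) \in M$ I set $m(z) \coloneqq \effN{m}(\BfCalXN, \Bfy)$ and $h(z) \coloneqq \effN{F}(\BfCalXN, \Bfy, \Bfv)$, so that the map in question is exactly $z \mapsto (m(z))^{-1} h(z)$.

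It then remains to check the three hypotheses of \cref{lem:m_inv_f}. First, boundedness and Lipschitz continuity of $m$ and $h$ on $M$ are precisely the content of \cref{lem:effOM_effOF}, whose restriction $\norm{\Bfv} \leq M_v$ matches the domain $\overline{B_{M_v}}$. Second, symmetry of $m(z)$ is immediate from \cref{eq:def_effNm}: each summand $\Phi_j^T \Phi_j$ is symmetric, hence so is $\effN{m} = I_\Rny + \tfrac{m}{N} \sum_{j=1}^N \Phi_j^T \Phi_j$. Third, for uniform ellipticity I compute, for any $v \in \Rny$,
\[
v^T \effN{m}(\BfCalXN,\Bfy)\, v
= \norm{v}^2 + \frac{m}{N} \sum_{j=1}^N \norm{\Phi_j v}^2
\geq \norm{v}^2,
\]
so that $m(z)$ is uniformly elliptic with constant $\delta_m = 1$, \emph{independent} of the point $z$ and of $N$.

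With all hypotheses verified, \cref{lem:m_inv_f} delivers the claim directly, and (using $\delta_m = 1$) even exhibits the explicit Lipschitz constant $\Lip(\effN{m}) \, \norm{\effN{F}}_\infty + \norm{\effN{m}}_\infty \, \Lip(\effN{F})$ on $M$. There is no genuinely hard step here: the analytic work has already been front-loaded into \cref{lem:effOM_effOF} (the Lipschitz and boundedness estimates) and \cref{lem:m_inv_f} (the inversion argument). The only points requiring care are the purely bookkeeping ones — confirming that restricting to the closed velocity ball $\overline{B_{M_v}}$ still yields a complete metric space, and that the constants furnished by \cref{lem:effOM_effOF} are compatible with that restriction. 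The one conceptual observation worth recording is that the ellipticity constant is $\delta_m = 1$ uniformly in $N$; although fixed $N$ suffices for the present statement, this $N$-independence is what will later permit uniform control as $N \to \infty$.
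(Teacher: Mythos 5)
Your proposal is correct and follows essentially the same route as the paper: invoke \cref{lem:effOM_effOF} for boundedness and Lipschitz continuity, verify that the effective mass is symmetric and uniformly elliptic with constant $1$, and conclude via \cref{lem:m_inv_f}. The only cosmetic difference is that the paper checks ellipticity for the single-particle mass $\effO{m}$ and lets averaging carry it to $\effN{m}$, whereas you compute $v^T \effN{m} v \geq \norm{v}^2$ directly; both are one-line computations, and your explicit setup of the complete metric space with the closed velocity ball merely spells out what the paper leaves implicit.
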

	
	\begin{proof}
		By \cref{lem:effOM_effOF}, we know that $\effN{m}$ and $\effN{F}$ are  Lipschitz continuous and bounded.
		To apply \cref{lem:m_inv_f}, it is left to show that $\effN{m}$ is symmetric and uniformly elliptic. It suffices to show these properties for $\effO{m}$.
		Symmetry follows directly from the definition $\effO{m}(\BfX,\Bfy) = I_{\Rny} + m \Phi(\BfX,\Bfy)^T \Phi(\BfX,\Bfy)$,
		and since $\Phi(\BfX,\Bfy)^T \Phi(\BfX,\Bfy)$ is non-negative definite, we can conclude that $\Bf w^T \effO{m} \Bf w \geq \Vert \Bf w \Vert^2$ for all $\Bf w \in \Rny$.
		Hence, by \cref{lem:m_inv_f}, $(\effN{m})^{-1} \effN{F}$ is Lipschitz continuous provided $\Vert \Bfv \Vert \leq M_v$.
	\end{proof}
			
	\begin{theorem}[Existence and uniqueness of solutions]
		For $T > 0$ and for initial conditions $\init \BfCalX \in (\Rnx)^N, \init \Bfy, \init \Bfv \in \Rny$ the ODE model \crefODE has a unique solution $(\BfCalXN,\Bfy)$ with $\BfCalXN \in C^2([0,T],(\Rnx)^N)$ and $\Bfy \in C^2([0,T],\Rny)$.
	\end{theorem}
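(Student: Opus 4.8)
The plan is to recast \crefODE as a first-order system and apply the Picard--Lindelöf theorem, using the energy estimate of \cref{lem:ODE_conserves_energy} to supply the a priori bound that upgrades the merely local Lipschitz continuity into global existence. Introducing the velocity $\Bfv \coloneqq \dot\Bfy$, the model becomes
\begin{align*}
	\dot\Bfy &= \Bfv, \\
	\dot\Bfv &= \left(\effN{m}(\BfCalXN,\Bfy)\right)^{-1} \effN{F}(\BfCalXN,\Bfy,\Bfv), \\
	\dot\BfXi &= \Phi_i[\Bfv] \fai,
\end{align*}
for the unknown $z = (\Bfy, \Bfv, \BfCalXN)$. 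A $C^1$ solution of this system is precisely a solution of \crefODE with $\Bfy \in C^2$, so it suffices to produce a unique such solution on $[0,T]$.

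First I would record that the right-hand side is only \emph{locally} Lipschitz, not globally so, because the term $\Omega_i[\Bfv,\Bfv]$ hidden in $\effN{F}$ grows quadratically in $\Bfv$. However, \cref{lem:effOM_effOF,lem:effM_inv_effF_lipschitz} show that on each slab $\{\Vert \Bfv \Vert \leq M_v\}$ the map $z \mapsto (\effN{m})^{-1}\effN{F}$ is Lipschitz continuous and bounded, and $z \mapsto \Phi_i[\Bfv]$ is Lipschitz and bounded there as well by \cref{ass:L_phi,ass:M_phi}. Since the right-hand side is continuous and locally Lipschitz, the Picard--Lindelöf theorem yields a unique maximal solution on some interval $[0,T^*)$ with $T^* > 0$.

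The crux is to show $T^* > T$, i.e.\ to rule out finite-time blow-up. Here I would invoke \cref{lem:ODE_conserves_energy}: any $C^1$ solution on $[0,T^*)$ is a genuine solution of \crefODE, hence conserves the total energy $E$, so $\Vert \Bfv(t) \Vert \leq M_v \coloneqq 2\big(E(\init\BfCalX,\init\Bfy,\init\Bfv)\big)^2$ uniformly in $t$. With this a priori velocity bound the solution stays inside the slab on which \cref{lem:effM_inv_effF_lipschitz} applies, and moreover $\Vert \dot\Bfy \Vert \leq M_v$, $\Vert \dot\BfXi \Vert \leq M_\Phi M_v$ by \cref{ass:M_phi}, while $\Vert \dot\Bfv \Vert \leq \Vert (\effN{m})^{-1} \Vert \, \Vert \effN{F} \Vert$ stays bounded, since $\Bf w^T \effN{m} \Bf w \geq \Vert \Bf w \Vert^2$ forces $\Vert (\effN{m})^{-1} \Vert \leq 1$ and $\effN{F}$ is bounded on the slab by \cref{lem:effOM_effOF}. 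Consequently $z(t)$ remains in a fixed compact set and is uniformly Lipschitz in $t$, so it extends continuously to $t=T^*$; if $T^* \leq T$ one reapplies Picard--Lindelöf at $T^*$, contradicting maximality. This forces $T^* > T$, and gluing the local uniqueness statements along $[0,T]$ gives a unique solution there.

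Finally, for the regularity claim $\BfCalXN \in C^2$ I would use that $g \in C^2$ by \cref{ass:g_C2} makes $(\BfX,\Bfy) \mapsto \Phi(\BfX,\Bfy)$ of class $C^1$ (the inverse of the invertible $C^1$ matrix $\DifX g$ is $C^1$). Hence $t \mapsto \Phi(\BfXi(t),\Bfy(t))[\Bfv(t)]$ is $C^1$, so $\dot\BfXi \in C^1$ and $\BfXi \in C^2$; likewise $\Bfy \in C^2$ since $\dot\Bfy = \Bfv \in C^1$. I expect the only genuine obstacle to be the quadratic, and therefore non-global, velocity dependence of $\effN{F}$: everything else is a routine continuation argument once \cref{lem:ODE_conserves_energy} confines the velocity to the slab on which the Lipschitz bounds of \cref{lem:effM_inv_effF_lipschitz} hold.
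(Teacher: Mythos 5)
Your proposal is correct and follows essentially the same route as the paper's proof: rewrite \crefODE as a first-order system, obtain Lipschitz bounds on the slab $\Vert \Bfv \Vert \leq M_v$ from \cref{lem:effOM_effOF,lem:effM_inv_effF_lipschitz}, use \cref{lem:ODE_conserves_energy} to confine the velocity to that slab, apply Picard--Lindel\"of, and recover $\BfCalXN \in C^2$ from \cref{ass:g_C2}. The only difference is presentational: you spell out the maximal-solution/continuation argument that the paper leaves implicit in the sentence \enquote{follows from the Picard--Lindel\"of Theorem and \cref{lem:ODE_conserves_energy}}.
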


	\begin{proof}
		First, we rewrite the ODE model \crefODE as
	\begin{subequations}
	\begin{empheq}[left={\empheqbiglbrace~}]{align}
			\ddot \Bfy &= (\effN{m}(\BfCalXN,\Bfy))^{-1} \effN{F}(\BfCalXN,\Bfy,\dot \Bfy),&
			\label{eq:tmp_ode_y}
			\\
			\dot \BfXi &= \Phi(\BfXi,\Bfy)[ \dot \Bfy ]
			&\fai.
			\label{eq:tmp_ode_x}
		\end{empheq}
	\end{subequations}
		
		We define $M_v \coloneqq 2 (E(\init \BfCalX,\init \Bfy,\init \Bfv))^2$ where $E$ is the total energy from \cref{eq:ode_total_energy}.
		
		The assumptions \cref{ass:L_phi,ass:M_phi} imply that 
		\[
		(\BfX,\Bfy,\Bfv) \mapsto \Phi(\BfX,\Bfy)[\Bfv]
		\]
		is Lipschitz continuous for $\Vert \Bfv \Vert \leq M_v$ and \cref{lem:effM_inv_effF_lipschitz}
		shows that $(\effN{m})^{-1} \effN{F}$ is Lipschitz continuous for $\Vert \Bfv \Vert \leq M_v$.

		 Existence and uniqueness of solutions for \crefODE follows from the Picard-Lindelöf Theorem and \cref{lem:ODE_conserves_energy}.
		 
		 The trajectory $\BfCalXN$ is actually twice continuously differentiable, since the right-hand side of \cref{eq:tmp_ode_x} is continuously differentiable by assumption \cref{ass:g_C2}.
	\end{proof}

	\section{The mean-field limit for the ODE model}
	\label{sec:mean_field}
	
	For an introduction to mean-field limits, we recommend the lecture notes from François Golse \cite{GolseDynamicsLargeParticle2016} or the review \cite{JabinReviewMeanField2014}.
	We will adapt the approach from \cite[Section 1.4]{GolseDynamicsLargeParticle2016} to prove the mean-field limit for the ODE model \crefODE. The main difference to \cite[Section 1.4]{GolseDynamicsLargeParticle2016} is that our ODE system contains an additional mean-field mass term, which requires additional estimates.
	First, we formally define a set of equations and then prove that these equations are the ODE model's mean-field limit. Our main result is the stability estimate in \cref{thm:dobrushin}.
	
	\subsection{Formal mean-field limit for the ODE model}
	
	We use the following notation from probability theory, see \cite{GolseDynamicsLargeParticle2016}. The space of probability measures over $\Rnx$ is $\PRnx{}$ and the space of probability measures with finite first moment is 
	\[
	\PRnx{1} \coloneqq \left\{ \mu \in \PRnx{} \mid \int \Vert x \Vert \, \dif \mu(x) < \infty \right\}.
	\]
	The push-forward of a measure $\mu \in \PRnx{1}$ under a map $\varphi : \Rnx \to \Rnx$ is defined as
	\begin{align*}
		\varphi \# \mu (A) \coloneqq \mu( \varphi(A) ) 
		\quad 
		\text{for all~} A \in \mathfrak{B}(\Rnx) 
	\end{align*}
	where $\mathfrak{B}(\Rnx)$ are the Borel measurable sets in $\Rnx$.

	Let $(\mu^t)_{t \in [0,\infty)} \subset \mathcal{P}^1(\RR^{n_x})$ be a family of probability measures representing the statistical particle distribution and let $\init \mu \in \PRnx{1}$ be an initial particle distribution.

	The formal assumption of the mean-field equations is as follows. If we consider independent, random initial conditions $\init \BfX_1, \dots, \init \BfX_N \sim \init \mu$, then, we can hope that there exists a statistical distribution $\mu^t \in \PRnx{1}$ such that
	$\BfX_1(t),\dots,\BfX_N(t) \sim \mu^t$ for $t \geq 0$.\footnote{For general interacting particle systems, this property is non-trivial and relates to the concept of propagation of chaos \cite{JabinReviewMeanField2014}. However, this property is less surprising for the ODE model \crefODE since it lacks pairwise interaction between particles.}
	This will be made precise in \cref{lem:mean_field_consistency} and finally with the proof of mean-field convergence. However, for now, we continue with the formal argumentation.

	We define the mean-field characteristic flow 
	\[
	(X^t)_{t \in [0,\infty)} \text{~with~} X^t : \Rnx \to \Rnx
	\]
	which describes the trajectories of a single \enquote{virtual} particle subject to the\\ mean-field influence of particles distributed according to the particle distribution $(\mu^t)_{t \in [0,\infty)}$.	
	It is governed by the \emph{mean-field characteristics flow equations} 	
	\newcommand{\crefCharFlowODE}{{\cref{eq:meso_y_eff,eq:meso_X_flow,eq:meso_mu_pushforward}}\xspace}
	\begin{subequations}
		\begin{empheq}[left={\empheqbiglbrace~}]{align}
			\label{eq:meso_y_eff}
			\eff{m}(\mu^t,\Bfy) \, \ddot{\Bfy} 
			&= 
			\eff F(\mu^t,\Bfy,\dot \Bfy), \\
			\label{eq:meso_X_flow}
			\dot X^t(x) &= \Phi(X^t(x),\Bf y)[ \dot \Bfy ] \quad
			\text{for all~} x \in \RR^{n_x}, \\
			\label{eq:meso_mu_pushforward}
			\mu^t &= X^t \# \init{\mu}
		\end{empheq}
	\end{subequations}
	and initial conditions
	\begin{align*}
		\mu^0 = \init \mu,
		\quad
		\Bfy(0) = \init \Bfy, 
		\quad
		\dot \Bfy(0) = \init \Bfv
		\quad
		\text{and}
		\quad
		X^0(x) = x \quad \text{for all~} x \in \RR^{n_x}
	\end{align*}
	with the definitions
	\begin{subequations}
		\begin{align}
			\eff{m}(\mu, \Bfy) 
			&\coloneqq 
			\int_{\Rnx} \effO{m}(x,\Bfy) \, \dif \mu(x),
			\\			
			\eff{F}(\mu, \Bfy, \Bfv) 
			&\coloneqq 
			\int_{\Rnx} \effO{F}(x,\Bfy,\Bfv) \, \dif \mu(x).
		\end{align}
	\end{subequations}
	Both integrals are finite, since both integrands are Lipschitz continuous by \\\cref{lem:effOM_effOF} and $\mu^t \in \PRnx{1}$.

	An alternative formulation of the mean-field equations is the mean-field PDE.
	First, we observe that \cref{eq:meso_X_flow} is precisely the characteristic ODE of a transport equation.
	Therefore, the mean-field PDE is 
	\newcommand{\crefMeanFieldPDE}{\cref{eq:meso_y_eff_pde,eq:meso_transport_pde}\xspace}
	\begin{subequations}
		\begin{empheq}[left={\empheqbiglbrace~}]{align}
			\label{eq:meso_y_eff_pde}
			\eff{m}(f,\Bfy) \, \ddot \Bfy 
			&= 
			\eff{F}(f,\Bfy,\dot \Bfy)
			\\
			\label{eq:meso_transport_pde}
			\partial_t f(x,t) &= -\mathrm{div}_x\Big(  f(x,t) \, \Phi(x,\Bfy(t))[ \dot \Bfy(t) ]\Big)
		\end{empheq}
	\end{subequations}
	where $f : \Rnx \times [0,T] \to [0,\infty)$ is the density of the particle position distribution $\mu^t$, i.e. $\dif \mu^t(x) = f(\dif x,t)$.
		
	\subsection{Consistency with the ODE model}
	
	We use empirical measures to show the relationship between the ODE model and the mean-field equations. For a particle position vector $\BfCalXN \in (\Rnx)^N$, we define the empirical measure
	\newcommand{\emp}{\mathrm{emp}}
	\begin{align}
		\mu_{\BfCalXN}^\emp \coloneqq \frac{1}{N} \sum_{i=1}^N \delta_{\BfX_i}
	\end{align}
	where $\delta_{\BfX_i}$ denotes the Dirac measure, which assigns mass $1$ to the point $\BfX_i$. The empirical measure is an exact representation of the particle positions $\BfCalXN$ up to permutations.
	
	By definition of the empirical measure, we obtain the relations
	\begin{align}
		\label{eq:emp_into_effm}
		\eff{m}(\mu^{\emp}_{\BfCalXN}, \Bfy) = \frac{1}{N} \sum_{i=1}^N \effO{m}(\BfXi,\Bfy) = \effN{m}(\BfCalXN,\Bfy)
	\end{align}
	and
	\begin{align}
		\label{eq:emp_into_effF}
		\eff{F}(\mu^{\emp}_{\BfCalXN}, \Bfy, \Bfv) = \frac{1}{N} \sum_{i=1}^N  \effO{F}(\BfXi,\Bfy, \Bfv) = \effN{F}(\BfCalXN,\Bfy, \Bfv).
	\end{align}	
	We see that the mean-field equations extend the ODE model, which is made precise in the following lemma. The idea is similar to \cite[Theorem 1.3.1]{GolseDynamicsLargeParticle2016}.

	\begin{lemma}[Consisitency with the ODE model]
		\label{lem:mean_field_consistency}
		Given a solution $(\BfCalXN, \Bfy)$ of the ODE model \crefODE with $\BfCalXN \in C^1([0,T],(\Rnx)^N)$ and $\Bfy \in C^2([0,T],\Rny)$, we define the empirical particle density $f^\pN$ by
		\[
		f^\pN(\dif x,t) \coloneqq \mu^\emp_{\BfCalXN(t)}.
		\] 
		Then, $(f^\pN, \Bfy)$ is a weak solution of the mean-field PDE \crefMeanFieldPDE with initial condition $\init \mu \coloneqq \mu^\emp_{\BfCalXN(0)}$. 
		Moreover, we can construct $X : [0,T] \times \Rnx \to \Rnx$ such that $(X, \Bfy)$ is a solution of the mean-field characteristic flow equations \crefCharFlowODE.
	\end{lemma}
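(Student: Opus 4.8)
The plan is to verify the two components of the mean-field PDE \cref{eq:meso_y_eff_pde,eq:meso_transport_pde} separately for the empirical density $f^{(N)}$, and then to construct the characteristic flow by integrating the characteristic ODE from each starting point. The macroscopic equation is the easy half: evaluating $\eff{m}$ and $\eff{F}$ at the empirical measure $\mu^{\mathrm{emp}}_{\BfCalXN(t)}$ and invoking the identities \cref{eq:emp_into_effm,eq:emp_into_effF} gives $\eff{m}(f^{(N)},\Bfy)=\effN{m}(\BfCalXN,\Bfy)$ and $\eff{F}(f^{(N)},\Bfy,\dot\Bfy)=\effN{F}(\BfCalXN,\Bfy,\dot\Bfy)$, so \cref{eq:meso_y_eff_pde} collapses exactly to the ODE-model equation \cref{eq:y_micro_ode}, which holds by hypothesis.

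For the transport equation I would work with the weak formulation: for a test function $\varphi\in C^1_c(\Rnx)$ one must check
\[
\frac{\dif}{\dif t}\int \varphi(x)\, f^{(N)}(\dif x,t) = \int \nabla\varphi(x)\cdot \Phi(x,\Bfy(t))[\dot\Bfy(t)]\, f^{(N)}(\dif x,t).
\]
Since $\int \varphi\, f^{(N)}(\dif x,t)=\tfrac1N\sum_{i=1}^N\varphi(\BfXi(t))$ and each $\BfXi\in C^1([0,T],\Rnx)$, the left-hand side equals $\tfrac1N\sum_i \nabla\varphi(\BfXi(t))\cdot\dot\BfXi(t)$; substituting the ODE-model relation \cref{eq:x_micro_ode}, namely $\dot\BfXi=\Phi(\BfXi,\Bfy)[\dot\Bfy]$, turns this into the right-hand side. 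This is the standard identity expressing that the empirical measure carried by a family of characteristic trajectories solves the associated continuity equation in the weak sense.

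The remaining task is to produce the flow $X$, and this is where the one genuine point of care lies. Freezing the curve $t\mapsto\Bfy(t)$, I would solve, for every starting point $x\in\Rnx$, the non-autonomous ODE $\dot X^t(x)=\Phi(X^t(x),\Bfy(t))[\dot\Bfy(t)]$ with $X^0(x)=x$. Its right-hand side is Lipschitz in the spatial variable uniformly on $[0,T]$, because $\Phi(\cdot,\Bfy(t))$ is bounded and Lipschitz by \cref{ass:L_phi,ass:M_phi} and $\dot\Bfy$ is bounded on the compact interval $[0,T]$ (equivalently by the energy bound \cref{eq:v_bounded} of \cref{lem:ODE_conserves_energy}), so Picard--Lindel\"of yields a unique global flow $X:[0,T]\times\Rnx\to\Rnx$ satisfying \cref{eq:meso_X_flow} by construction. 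The crux is then the push-forward identity \cref{eq:meso_mu_pushforward}: both $t\mapsto X^t(\init\BfX_i)$ and $t\mapsto\BfXi(t)$ solve the very same characteristic ODE with the same initial datum $\init\BfX_i$, so uniqueness forces $X^t(\init\BfX_i)=\BfXi(t)$. Pushing forward the atoms of $\init\mu=\tfrac1N\sum_i\delta_{\init\BfX_i}$ accordingly gives $X^t\#\init\mu=\tfrac1N\sum_i\delta_{\BfXi(t)}=f^{(N)}(t)$. Together with \cref{eq:meso_y_eff}, which is identical to the already-verified \cref{eq:meso_y_eff_pde}, this shows $(X,\Bfy)$ solves the characteristic flow equations \cref{eq:meso_y_eff,eq:meso_X_flow,eq:meso_mu_pushforward}, completing the proof.
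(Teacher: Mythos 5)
Your proposal is correct and follows essentially the same route as the paper: verify the macroscopic Newton equation via the empirical-measure identities \cref{eq:emp_into_effm,eq:emp_into_effF}, construct the flow by Picard--Lindel\"of and identify $X^t(\init{\BfX}_i)=\BfX_i(t)$ through uniqueness of the characteristic ODE, and check the weak transport equation. The only (harmless) difference is that you verify the weak continuity equation by differentiating the finite sum $\tfrac{1}{N}\sum_i\varphi(\BfX_i(t))$ directly and substituting $\dot{\BfX}_i=\Phi(\BfX_i,\Bfy)[\dot\Bfy]$, whereas the paper routes the same computation through the push-forward representation $\int\xi(X^t(x))\,\dif \init{\mu}(x)$ followed by an integration by parts.
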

	
	\begin{proof}
		
		We will first show how to construct a solution of \crefCharFlowODE. Let $X : [0,T] \times \Rnx \to \Rnx$
		be the solution of
	\begin{subequations}
	\begin{empheq}[left={\empheqbiglbrace~}]{align}
			\label{eq:tmp_X_flow}
		\dot X^t(x) &= \Phi( X^t(x),\Bfy(t))[ \dot \Bfy(t) ],\\
		X^0(x) &= x
		\end{empheq}
	\end{subequations}
		for all $x \in \Rnx$.
		The right-hand side is Lipschitz continuous in $X^t(x)$ by \cref{ass:L_phi} and continuous in $t$. Therefore, $X^t$ is well-defined by the Picard-Lindelöf Theorem.
		
		The flow $X^t$ is exactly the flow of the ODE \cref{eq:x_micro_ode}. If we evaluate $X^t$ at the initial particle positions $\init \BfCalX$, we obtain the original particle trajectories, i.e.
		\begin{align}
			\label{eq:flow_gives_partices_back}
			X^t( \init \BfXi ) = \BfXi(t) \quad \text{for all~} t \in [0,T].
		\end{align}
		
		Now, we define the initial particle measure as $\init \mu \coloneqq \mu^{\emp}_{\BfCalXN(0)}$
		and $\mu^t \coloneqq X^t \# \init \mu$ as solution candidate. Both measures are empirical measures, hence, $\init \mu, \mu^t \in \PRnx{1}$ for all $t \in [0,T]$.
		
		By \cref{eq:flow_gives_partices_back}, we obtain
		\begin{align}
			\label{eq:tmp_mu_t}
		\mu^t = X^t \# \init \mu = X^t \# \mu^{\emp}_{\BfCalXN(0)} = \mu^{\emp}_{\BfCalXN(t)}.
		\end{align}
		Using \cref{eq:y_micro_ode,eq:emp_into_effF,eq:tmp_mu_t}, we compute
		\begin{align*}
		\eff{m}(\mu^t,\Bfy) \ddot \Bfy 
		&= \effN{m}(\BfCalXN,\Bfy) \, \ddot \Bfy \\
		&= \effN{F}(\BfCalXN,\Bfy,\dot \Bfy) \\
		&= \eff{F}(\mu^t,\Bfy,\dot \Bfy).
		\end{align*}		
		As a result, $(\Bfy, X)$ is a solution of \crefCharFlowODE with initial conditions $\init \mu$.
		
		To show that $f^\pN$ is a weak solution of the mean-field PDE,
		we consider a test function with compact support $\xi \in C^\infty_c(\Rnx,\RR)$ and compute
		\begin{align*}
			\frac{\dif}{\dif t}
			\langle \, \xi , f^\pN \rangle
			&=
			\frac{\dif}{\dif t}
			\int \xi(x) \, \dif \mu^t(x)
			\\
			&=
			\frac{\dif}{\dif t}
			\int \xi(X^t(x')) \, \dif \init \mu(x')
			\\
			&=
			\int 
			\frac{\dif}{\dif t}
			\xi(X^t(x')) \, \dif \init \mu(x')
			\\
			&=
			\int 
			\partial_X \xi(X^t(x')) \, \dot X^t(x') \, \dif \init \mu(x')
			\\
			&=
			\int 
			\partial_X \xi(X^t(x')) \, \Phi(X^t(x'),\Bfy(t)) \, \dot \Bfy(t) \, \dif \init \mu(x')
			\\
			&=
			\int \partial_x \xi(x) \, \Phi(x,\Bfy(t))[\dot \Bfy(t)] f^\pN(x,t) \, \dif x
			\\
			&=
			- \int \xi(x) \, \mathrm{div}_x \left( \Phi(x,\Bfy(t))[\dot \Bfy(t)] f^\pN(x,t) \right) \, \dif x
			\\
			&= - \langle \xi \, , \mathrm{div}( (\Phi	 \, \dot \Bfy) f^\pN ) \rangle.
		\end{align*}
		Hence, $f^\pN$ is a weak solution of \cref{eq:meso_transport_pde}.	
	\end{proof}

	\subsection{Existence and uniqueness for the mean-field equations}
	
	The existence and uniqueness of solutions for the mean-field characteristics flow equations \crefCharFlowODE follows the same steps as in the discrete case. 
	
	As for the ODE model, we prove conservation of energy to justify the restriction to bounded velocities of the macroscopic component.
	
	\begin{lemma}[Conservation of energy]
		\label{lem:char_flow_conserves_energy}
		Let $(X, \Bfy, \dot \Bfy) \in C^1([0,T],Z)$ be a solution of \crefCharFlowODE.
		Then, the total energy
		\begin{align}
			E = \frac{1}{2} \Vert \dot \Bfy \Vert^2 + \mathcal{W}_0(\Bfy) + \int_\Rnx \left( \frac{m}{2}\Vert \Phi(x,\Bfy) \, \dot \Bfy \Vert^2 + \mathcal{W}_1(x) \right) \, \dif \mu^t(x) 
		\end{align}
		is conserved.
	\end{lemma}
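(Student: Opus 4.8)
The plan is to mirror the computation in the proof of \cref{lem:ODE_conserves_energy}, replacing the empirical average $\frac1N\sum_i$ by the integral $\int\,\cdot\;\dif\mu^t$ against the mean-field distribution. The only genuinely new ingredient is that the measure $\mu^t$ itself depends on time, so before differentiating $E$ I would remove this dependence. First I would use the push-forward relation \cref{eq:meso_mu_pushforward}, $\mu^t = X^t \# \init\mu$, to rewrite the particle part of the energy as an integral over the fixed reference measure $\init\mu$:
\[
\int_\Rnx \Big( \tfrac{m}{2}\Vert \Phi(x,\Bfy)\dot\Bfy\Vert^2 + \Cal W_1(x)\Big)\dif\mu^t(x) = \int_\Rnx \Big( \tfrac{m}{2}\Vert \Phi(X^t(x'),\Bfy)\dot\Bfy\Vert^2 + \Cal W_1(X^t(x'))\Big)\dif\init\mu(x').
\]
After this change of variables all the $t$-dependence sits inside the integrand and the domain of integration is fixed.

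Second, I would differentiate $E$ in $t$ and interchange $\tfrac{\dif}{\dif t}$ with the integral. Writing $\dot X^t(x') = \Phi(X^t(x'),\Bfy)[\dot\Bfy]$ from \cref{eq:meso_X_flow} and differentiating once more gives the second-order relation $\ddot X^t = \Phi[\ddot\Bfy] + \Omega[\dot\Bfy,\dot\Bfy]$, exactly as in \cref{eq:tmp_x_accel}. Using $F_1 = -\nabla\Cal W_1$ and $\dot X^t = \Phi[\dot\Bfy]$ to factor out $\dot\Bfy^T$, the $t$-derivative of the integrand becomes
\[
\dot X^{t\,T}\big(m\,\ddot X^t - F_1(X^t)\big) = \dot\Bfy^T\,\Phi(X^t,\Bfy)^T\big(m(\Phi[\ddot\Bfy]+\Omega[\dot\Bfy,\dot\Bfy]) - F_1(X^t)\big).
\]

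Third, changing variables back from $\init\mu$ to $\mu^t$ and adding the macroscopic contributions $\dot\Bfy^T\ddot\Bfy$ from $\tfrac12\Vert\dot\Bfy\Vert^2$ and $\nabla\Cal W_0(\Bfy)^T\dot\Bfy = -F_0(\Bfy)^T\dot\Bfy$ from $\Cal W_0(\Bfy)$, I would collect the $m\,\Phi^T\Phi$ term with the identity into $\eff m(\mu^t,\Bfy)$ and the remaining terms into $\eff F(\mu^t,\Bfy,\dot\Bfy)$ (using that $\mu^t$ is a probability measure, so $\int I\,\dif\mu^t = I$ and $\int F_0\,\dif\mu^t = F_0$). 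This yields
\[
\dot E = \dot\Bfy^T\big( \eff m(\mu^t,\Bfy)\,\ddot\Bfy - \eff F(\mu^t,\Bfy,\dot\Bfy)\big) = 0
\]
by the mean-field equation \cref{eq:meso_y_eff}, which is the precise analogue of the final lines of the proof of \cref{lem:ODE_conserves_energy}.

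The main obstacle is the rigorous justification of the interchange of differentiation and integration in the second step. Since $\dot\Bfy \in C^1([0,T],\Rny)$ is continuous on a compact interval, it is bounded there, say $\Vert\dot\Bfy(t)\Vert\le M_v$; combined with the uniform boundedness of $\Phi$, $\Omega$ and $F_1$ from \cref{ass:M_phi,ass:M_omega,ass:M_F1}, the $t$-derivative of the integrand computed above is bounded by a constant independent of both $t\in[0,T]$ and $x'$. Such a constant is $\init\mu$-integrable because $\init\mu$ is a probability measure, so the dominated convergence theorem (Leibniz rule) applies and legitimises differentiation under the integral sign, closing the argument.
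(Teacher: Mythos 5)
Your proof takes essentially the same route as the paper's: rewrite the particle part of the energy over the fixed reference measure $\init{\mu}$ via the push-forward relation \cref{eq:meso_mu_pushforward}, differentiate under the integral using $\dot X^t = \Phi[\dot\Bfy]$ and $\ddot X^t = \Phi[\ddot\Bfy]+\Omega[\dot\Bfy,\dot\Bfy]$, and collect terms into $\dot\Bfy^T\big(\eff{m}(\mu^t,\Bfy)\,\ddot\Bfy - \eff{F}(\mu^t,\Bfy,\dot\Bfy)\big)=0$ by \cref{eq:meso_y_eff}. Your explicit dominated-convergence justification of the interchange of $\frac{\dif}{\dif t}$ and $\int \cdot \,\dif\init{\mu}$ is a sound addition that the paper leaves implicit; just note that the dominating bound also requires boundedness of $\ddot\Bfy$, which follows since $\dot\Bfy \in C^1([0,T],\Rny)$ makes $\ddot\Bfy$ continuous on the compact interval $[0,T]$.
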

	
	The proof is analogous to the proof of \cref{lem:ODE_conserves_energy}.
	
	\begin{proof}
		\label{proof:cons_energy_char_flow}
		
		For the following calculations, we use the shorthand notation
		\[
		\Phi_x \coloneqq \Phi(X^t(x),\Bfy)
		\quad \text{and} \quad
		\Omega_x \coloneqq \Omega(X^t(x),\Bfy).
		\]
		
		First, we take the time derivative of \cref{eq:meso_X_flow}, which yields
		\begin{align}
			\label{eq:tmp_meso_X_accel}
			\ddot X^t(x) &= \Phi(X^t(x),\Bfy(t))[ \ddot \Bfy(t)]
			+
			\Omega(X^t(x),\Bfy(t))[ \dot \Bfy(t), \dot \Bfy(t) ]
			\notag\\
			&= \Phi_x[ \ddot \Bfy] + \Omega_x[\dot\Bfy,\dot\Bfy].
		\end{align}

		Using \cref{eq:meso_y_eff,eq:meso_X_flow,eq:meso_mu_pushforward,eq:tmp_meso_X_accel}, we compute
		\begin{align*}
			\dot E
			&=
			\frac{\dif}{\dif t} \left(
			\frac{1}{2} \Vert \dot \Bfy \Vert^2 + \mathcal{W}_0(\Bfy) + \int_\Rnx \frac{m}{2}\Vert \Phi(x',\Bfy) \, \dot \Bfy \Vert^2 + \mathcal{W}_1(x') \, \dif \mu^t(x') 
			\right)
			\\
			&=
			\frac{\dif}{\dif t}
			\left(
			\frac{1}{2} \Vert \dot \Bfy \Vert^2 + \mathcal{W}_0(\Bfy) + \int_\Rnx \frac{m}{2}\Vert \Phi(X^t(x),\Bfy) \, \dot \Bfy \Vert^2 + \mathcal{W}_1(X^t(x)) \, \dif \init \mu(x) 
			\right)
			\\
			&=
			\dot \Bfy^T ( \ddot \Bfy - F_0(\Bfy) )
			+
			\frac{\dif}{\dif t}
			\int_\Rnx
			\frac{m}{2} \Vert \dot X^t(x) \Vert^2
			+ \mathcal{W}_1(X^t(x))
			\, \dif \init \mu(x)
			\\			
			&=
			\dot \Bfy^T ( \ddot \Bfy - F_0(\Bfy) )
			+
			\int_\Rnx
			(\dot X^t(x))^T ( m \ddot X^t(x) -  F_1(X^t(x))
			\, \dif \init \mu(x)
			\\
			&=
			\dot \Bfy^T ( \ddot \Bfy - F_0(\Bfy) )
			+
			\int_\Rnx
			\dot \Bfy^T \Phi_x^T ( m \Phi_x[ \ddot \Bfy] 
			+ m \Omega_x[ \dot \Bfy, \dot \Bfy] -  F_1(X^t(x))
			\, \dif \init \mu(x)
			\\
			&=
			\dot \Bfy^T \Big( 
			\Big(1+m \int_\Rnx \Phi_x^T \Phi_x \, \dif \init \mu(x) \Big)
			\ddot \Bfy
			\\
			& \quad \quad \quad - ( F_0(\Bfy) + \int_\Rnx \Phi_x^T \big(F_1(X^t(x)) - m\Omega_x[\dot \Bfy,\dot \Bfy]\big) \, \dif \init \mu(x) )
			\Big)
			\\
			&= \dot \Bfy^T ( \eff{m} \ddot \Bfy - \eff{F} )
			\\
			&= 0.
		\end{align*}
	\end{proof}
	
	To prove a Lipschitz bound for the mean-field characteristic flow equations \crefCharFlowODE, we first need to choose a function space for the flow $X^t$.
	We pick the function space of continuous and at most linearly growing functions, see \cite[Proof of Theorem 1.3.2]{GolseDynamicsLargeParticle2016}. We define
	\begin{align}
		Y \coloneqq \{ \varphi \mid \varphi \in C(\Rnx, \Rnx), \sup_{x \in \Rnx} \frac{\norm{\varphi(x)}}{1 + \norm{x}} < \infty \}
	\end{align}
	which forms a Banach space with the norm
	\[
	\norm{\varphi}_Y \coloneqq \sup_{x \in \RR^{n_x}}\frac{\norm{\varphi(x)}}{1 + \norm{x}}. 
	\]
	Our goal is to rewrite \crefCharFlowODE as an ODE of the form
	$
	\dot z = h(z).
	$
	Therefore, we define
	\begin{align}
	Z &\coloneqq Y \oplus \Rny \oplus \Rny.
	\end{align}
	The space $Z$ is a Banach space with the norm 
	\[
	\norm{(\varphi,\Bfy,\Bfv)}_Z = \norm{\varphi}_Y + \norm{\Bfy} + \norm{\Bfv}.
	\]
	Since conservation of energy implies that the velocity $\dot \Bfy$ will stay bounded for all times, we can restrict the domain of the right-hand side to the closed subset
	\begin{align}
		Z_{M_v} &\coloneqq \{ z = (\varphi, \Bfy, \Bfv) \in Z \mid \Vert \Bfv \Vert \leq M_v\} \subseteq Z
		\end{align}
	where $M_v>0$ is a constant.
	
	With the definition 
	\[
	h: Z_{M_v} \to Z: \begin{pmatrix}\varphi \\ \Bfy \\ \Bfv \end{pmatrix} \mapsto 
	\begin{pmatrix}
		x \mapsto \Phi(\varphi(x),\Bfy)[\Bfv]\\
		\Bfv \\
		(\eff{m}(\varphi \# \init \mu, \Bfy) )^{-1} \eff{F}(\varphi \# \init \mu, \Bfy, \Bfv)
	\end{pmatrix}
	\]
	we can rewrite \crefCharFlowODE as
	\[
	\dot z = h(z).
	\]
	Sometimes, we will write $h(\varphi,\Bfy,\dot\Bfy; \init \mu)$ to emphasise the dependency on $\init \mu$.
	The first component of the map $b$ is indeed an element of $Y$, since  \cref{ass:M_omega} implies\\ $\Vert \Phi(\varphi(\cdot),\Bfy)[\Bfv] \Vert_Y \leq M_\Phi M_v$ for all $(\varphi, \Bfy, \Bfv) \in Z_{M_v}$.

	\begin{theorem}[Existence and uniqueness] 
		\label{thm:pks_well_posed}
		For $\init \mu \in \mathcal{P}^1(\Rnx)$ and $\init \Bfy, \init \Bfv \in \Rny$, the system
		\crefCharFlowODE
		has a unique solution $(\Bfy,X) \in C^2([0,T],\Rny) \times C^1([0,T] \times \Rnx, \Rnx)$. 	
	\end{theorem}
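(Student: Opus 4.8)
The plan is to mirror the discrete existence proof and the argument in \cite[Proof of Theorem 1.3.2]{GolseDynamicsLargeParticle2016}: apply the Banach-space version of the Picard-Lindelöf theorem to the reformulated equation $\dot z = h(z)$ with $z = (X^t,\Bfy,\dot\Bfy)$ on the closed, hence complete, set $Z_{M_v} \subseteq Z$. First I would fix $M_v > 0$ as the a priori bound on $\Vert\dot\Bfy\Vert$ furnished by the conservation of energy in \cref{lem:char_flow_conserves_energy} (analogous to \cref{eq:v_bounded}), so that every solution remains inside $Z_{M_v}$. The whole statement then reduces to the single estimate that $h : Z_{M_v} \to Z$ is Lipschitz continuous; once this is shown, Picard-Lindelöf gives a unique local solution, and the a priori bounds let it be continued to all of $[0,T]$.

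To verify the Lipschitz property I would treat the three components of $h$ separately. The second component $\Bfv \mapsto \Bfv$ is the identity and trivially Lipschitz. For the first component $\varphi \mapsto (x \mapsto \Phi(\varphi(x),\Bfy)[\Bfv])$, I would use \cref{ass:L_phi,ass:M_phi}: the difference of two flow fields is controlled in the $Y$-norm, with $L_\Phi$ handling the variation of $\Phi$ and $M_\Phi$ handling the variation of $\Bfv$, where $\Vert\Bfv\Vert \leq M_v$ absorbs the velocity factor. The third and only delicate component is $(\varphi,\Bfy,\Bfv) \mapsto (\eff{m}(\varphi\#\init\mu,\Bfy))^{-1}\eff{F}(\varphi\#\init\mu,\Bfy,\Bfv)$.

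The main obstacle is thus the dependence of the effective mass and force on the pushforward measure $\varphi\#\init\mu$. I would combine two ingredients. First, the map $\varphi \mapsto \varphi\#\init\mu$ is Lipschitz from $Y$ into $(\PRnx{1},W_1)$, since $\init\mu$ has finite first moment: using the coupling induced by $(\varphi_1,\varphi_2)$,
\[
W_1(\varphi_1\#\init\mu,\varphi_2\#\init\mu) \leq \int_\Rnx \Vert \varphi_1(x)-\varphi_2(x)\Vert \,\dif\init\mu(x) \leq \Vert\varphi_1-\varphi_2\Vert_Y \int_\Rnx (1+\Vert x\Vert)\,\dif\init\mu(x),
\]
with a finite constant. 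Second, by \cref{lem:effOM_effOF} the integrands $\effO{m},\effO{F}$ are bounded and Lipschitz, so Kantorovich--Rubinstein duality shows that $\mu\mapsto\eff{m}(\mu,\Bfy)$ and $\mu\mapsto\eff{F}(\mu,\Bfy,\Bfv)$ are Lipschitz in $W_1$, and jointly Lipschitz in $\Bfy,\Bfv$ for $\Vert\Bfv\Vert\leq M_v$. Composing the two makes $\eff{m}$ and $\eff{F}$ bounded and Lipschitz as functions on $Z_{M_v}$.

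It then remains to pass from $\eff{m},\eff{F}$ to $(\eff{m})^{-1}\eff{F}$. As in the proof of \cref{lem:effM_inv_effF_lipschitz}, $\eff{m}(\mu,\Bfy) = I_\Rny + m\int \Phi^T\Phi\,\dif\mu$ is symmetric with $\Bf w^T\eff{m}\,\Bf w \geq \Vert\Bf w\Vert^2$, hence uniformly elliptic with $\delta_m = 1$ independently of $\mu$ and $\Bfy$; applying \cref{lem:m_inv_f} with the complete metric space $Z_{M_v}$ then gives Lipschitz continuity of the third component, completing the proof that $h$ is Lipschitz. For the globalisation I note that $\Vert\dot X^t(x)\Vert \leq M_\Phi M_v$ forces $\Vert X^t\Vert_Y$ to grow at most linearly in $t$, so the solution cannot blow up and extends to $[0,T]$. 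Finally, the regularity $\Bfy\in C^2$ follows since $\ddot\Bfy$ equals the continuous third component of $h$, and $X\in C^1$ follows from the flow equation \cref{eq:meso_X_flow} together with the continuity of $\Phi$.
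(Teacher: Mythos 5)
Your proof is correct and structurally the same as the paper's: fix $M_v$ from conservation of energy (\cref{lem:char_flow_conserves_energy}), verify componentwise that $h$ is Lipschitz on $Z_{M_v}$, handle $(\eff{m})^{-1}\eff{F}$ via \cref{lem:m_inv_f} with ellipticity constant $\delta_m = 1$, and conclude with Picard--Lindel\"of. The one step where you genuinely deviate is the Lipschitz dependence of $\eff{m}$ and $\eff{F}$ on the flow map $\varphi$. The paper proves this in a single direct estimate (\cref{lem:lipschitz_A}),
\[
\left\vert \int \phi(\varphi(x)) \, \dif \init\mu(x) - \int \phi(\psi(x)) \, \dif \init\mu(x) \right\vert
\leq \Op{Lip}(\phi)\, C_{\init\mu} \norm{\varphi - \psi}_Y,
\]
whereas you factor it through Wasserstein space: $\varphi \mapsto \varphi \# \init\mu$ is Lipschitz from $Y$ into $(\PRnx{1}, W_1)$, and $\mu \mapsto \eff{m}(\mu,\Bfy)$, $\mu \mapsto \eff{F}(\mu,\Bfy,\Bfv)$ are Lipschitz in $W_1$ by duality --- this is the paper's \cref{lem:lipschitz_B}, which it only introduces later for the stability estimate \cref{thm:dobrushin}. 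Composing your two steps reproduces \cref{lem:lipschitz_A} exactly, so the mathematical content coincides; your factoring is slightly more modular (it reuses the duality lemma needed anyway for the stability result), while the paper's direct estimate avoids the coupling (primal) characterisation of $W_1$, which you invoke for the pushforward bound but which the paper never establishes, since it defines $W_1$ by the duality formula alone. This is harmless: your pushforward bound also follows from the dual formula, because for $\Op{Lip}(\phi) \leq 1$ one has $\vert \int \phi \, \dif (\varphi_1 \# \init\mu) - \int \phi \, \dif (\varphi_2 \# \init\mu) \vert = \vert \int (\phi \circ \varphi_1 - \phi \circ \varphi_2) \, \dif \init\mu \vert$. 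Two minor remarks: your globalisation argument is sound (the paper leaves it implicit, relying on the uniform-in-time Lipschitz bound on $Z_{M_v}$), and for the claimed regularity $X \in C^1([0,T] \times \Rnx, \Rnx)$ continuity of $\Phi$ is not quite enough --- you need $C^1$-dependence on the initial point $x$, which holds because $\Phi$ is $C^1$ in $x$ by \cref{ass:g_C2}; the paper's own proof does not address this point at all.
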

		
	To proof existence and uniqueness, we will use the following Lemma.
	\begin{lemma}
		\label{lem:lipschitz_A}
		Let 
		$\phi : \RR^d \to \RR$ be Lipschitz continuous, $\varphi, \psi \in Y$ and $\mu \in \PRnx{1}$ then
		\[
		\left\vert \int_\Rnx \phi (\varphi(x)) \, \dif \mu(x) - \int_\Rnx \phi (\psi(x)) \, \dif \mu(x) \right\vert
		\leq
		\Lip(\phi ) C_{\mu} \Vert \varphi - \psi \Vert_Y
		\]
		where $C_{\mu} \coloneqq \int (1+\Vert x \Vert) \, \dif \mu(x) < \infty$.
	\end{lemma}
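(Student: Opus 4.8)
The plan is to reduce the statement to a pointwise inequality under the integral sign. Since both integrals are taken against the same measure $\mu$, I would first combine them by linearity and move the absolute value inside using the triangle inequality for integrals, giving
\[
\left\vert \int_\Rnx \phi(\varphi(x)) \, \dif\mu(x) - \int_\Rnx \phi(\psi(x)) \, \dif\mu(x) \right\vert \leq \int_\Rnx \left\vert \phi(\varphi(x)) - \phi(\psi(x)) \right\vert \, \dif\mu(x).
\]
This step is legitimate once I check that $x \mapsto \phi(\varphi(x))$ and $x \mapsto \phi(\psi(x))$ are $\mu$-integrable, which follows because $\phi$ is Lipschitz (hence of at most linear growth), $\varphi, \psi \in Y$ grow at most linearly, and $\mu$ has finite first moment.

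Next, I would estimate the integrand pointwise. The Lipschitz continuity of $\phi$ gives $\vert \phi(\varphi(x)) - \phi(\psi(x)) \vert \leq \Lip(\phi) \Vert \varphi(x) - \psi(x)\Vert$ for every $x \in \Rnx$. Because $Y$ is a vector space, $\varphi - \psi \in Y$, and the very definition of the $Y$-norm yields the key growth bound $\Vert \varphi(x) - \psi(x)\Vert \leq \Vert \varphi - \psi \Vert_Y (1 + \Vert x \Vert)$. Chaining these two inequalities bounds the integrand by $\Lip(\phi)\,\Vert \varphi - \psi\Vert_Y (1 + \Vert x\Vert)$, in which only the factor $1+\Vert x\Vert$ depends on $x$.

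Finally, I would integrate this bound against $\mu$ and pull out the constants independent of $x$, obtaining
\[
\int_\Rnx \Lip(\phi)\,\Vert \varphi - \psi\Vert_Y (1 + \Vert x\Vert) \, \dif\mu(x) = \Lip(\phi)\,\Vert \varphi - \psi\Vert_Y \, C_\mu.
\]
The finiteness $C_\mu < \infty$ is immediate, since $\mu$ is a probability measure so $\int_\Rnx 1 \, \dif\mu = 1$, and $\mu \in \PRnx{1}$ means $\int_\Rnx \Vert x\Vert\,\dif\mu(x) < \infty$. There is no genuine obstacle in this argument; the only points that deserve a line of justification are the integrability condition needed to interchange the absolute value with the integral and the clean application of the $Y$-norm definition, both of which are routine.
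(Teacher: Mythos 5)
Your proof is correct and follows essentially the same route as the paper: bound the difference of integrals by $\Lip(\phi)\int \Vert \varphi(x)-\psi(x)\Vert \,\dif\mu(x)$, then insert the factor $(1+\Vert x\Vert)$ to convert the pointwise difference into the $Y$-norm and the moment constant $C_\mu$. The only difference is that you spell out the integrability and triangle-inequality justifications that the paper leaves implicit, which is harmless.
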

	
	\begin{proof}
		We compute
		\begin{align*}
			\MoveEqLeft[4]
			\left\vert \int_\Rnx \phi (\varphi(x)) \, \dif \mu(x) - \int_\Rnx \phi (\psi(x)) \, \dif \mu(x) \right\vert
			\\
			&\leq
			\Lip(\phi )
			\int_\Rnx \Vert \varphi(x) - \psi(x)\Vert \, \dif \mu(x)
			\\
			&=
			\Lip(\phi )
			\int_\Rnx \frac{\Vert \varphi(x) - \psi(x)\Vert}{1+\Vert x \Vert} (1+\Vert x \Vert) \, \dif \mu(x)
			\\
			&\leq \Lip(\phi ) C_\mu \Vert \varphi - \psi \Vert_Y.
		\end{align*}
	\end{proof}

\begin{proof}[Proof of \cref{thm:pks_well_posed}] 
	We define $M_v \coloneqq 2 E(\Bfy, \dot \Bfy, \init \mu)$.
	Then, \cref{lem:char_flow_conserves_energy} shows that every solution candidate has to satisfy $\Vert \dot \Bfy(t) \Vert \leq M_v$ for all $t \in [0,T]$.
	
	Now we show that the components of the map $h$ are Lipschitz continuous for all $(\varphi, \Bfy, \Bfv) \in Z_{M_v}$.
	
	We start with the component $\Phi(\varphi(\cdot),\Bfy)[\Bfv]$.
	The map  $(\varphi,\Bfy) \mapsto \Phi(\varphi(\cdot),\Bfy)$ is Lipschitz continuous by assumption \cref{ass:L_phi}, i.e. 
	\[
	\sup_{x \in \Rnx} \frac{\Vert \Phi(\varphi_1(x),\Bfy_1) - \Phi(\varphi_2(x),\Bfy_2) \Vert}{1 + \Vert x \Vert}
	\leq L_\Phi \left( \sup_{x \in \Rnx} \frac{\Vert \varphi_1(x) - \varphi_2(x) \Vert}{1+\Vert x \Vert} + \Vert \Bfy_1 - \Bfy_2 \Vert \right)
	\]
	and assumption \cref{ass:M_phi} yields the upper bound
	\[
	\Vert \Phi( \varphi(\cdot) , \Bfy ) \Vert_X \leq M_\Phi.
	\]
	Therefore, $(\varphi,\Bfy,\Bfv) \mapsto \Phi(\varphi(\cdot),\Bfy)[\Bfv]$ is Lipschitz for $\Vert \Bfv \Vert \leq M_v$.

	By \cref{lem:effOM_effOF}, we know that $\effO{m}$ and $\effO{F}$ are bounded and Lipschitz. Therefore, 
	\cref{lem:lipschitz_A} implies that 
	\[
	(\varphi,\Bfy,\Bfv) \mapsto \eff{m}(\varphi \# \init \mu,\Bfy,\Bfv)
	\]
	and
	\[
	(\varphi,\Bfy,\Bfv) \mapsto \eff{F}(\varphi \# \init \mu,\Bfy,\Bfv)
	\]
	are Lipschitz continuous and bounded for $\Vert \Bfv \Vert \leq M_v$.
	
	Moreover, $\eff{m}$ is symmetric and uniformly elliptic since integration of $\effO{m}$ preserves these properties as well.
	
	\cref{lem:m_inv_f} yields that for fixed $\init \mu$ the map 
	\[
	(\varphi,\Bfy,\Bfy) \mapsto h(\varphi,\Bfy,\dot\Bfy; \init \mu)
	\] is Lipschitz continuous provided $\Vert \Bfv \Vert \leq M_v$.
		
	Hence, the Picard-Lindelöf Theorem and \cref{lem:char_flow_conserves_energy} yield existence and uniqueness of solutions.
	
	We remark that the Lipschitz constant depends on $M_v$ and $C_{\init \mu}$.
	The latter is finite since $\init \mu \in \PRnx{1}$.
\end{proof}

	\subsection{Stability estimate}

   	The mean-field limit asserts that for $N \to \infty$ the behaviour of the ODE model \crefODE is well-approximated by the mean-field characteristic flow \crefCharFlowODE.
   	\cref{lem:mean_field_consistency} implies that solutions of the ODE model correspond to solutions of the mean-field equations \crefCharFlowODE with empirical measures as initial data. Hence, it is sufficient to show that solutions of \crefCharFlowODE depend continuously on the initial particle distribution $\init \mu$ to show convergence of the mean-field limit.
   	
   	As a metric between particle distributions,
   	we use the Monge-Kantorovich \\distance (also called Wasserstein distance).
	For a detailed study of the Monge-Kantorovich distance, we refer to \cite[Chapter 6]{VillaniOptimalTransport2009}.
	Since we will need the duality formula of the Monge-Kantorovich distance, we use this characterisation as a definition.

	\begin{definition}[Monge-Kantorovich distance {\cite[Proposition 1.4.2]{GolseDynamicsLargeParticle2016}}]
		\index{Monge-Kantorovich distance}
		\label{def:W1_dual}~\\
		For $\nu,\mu \in \PRnx{1}$, we define the \textit{Monge-Kantorovich distance (with exponent $1$)} as
		\begin{align}
			W_1(\nu,\mu) =
			\sup_{ \stackrel{\phi \in C(\Rnx,\Rnx)}{\Lip(\phi) \leq 1} }
			\left\vert \int_{\Rnx} \phi(x) \, \dif \nu(x) - \int_{\Rnx} \phi(x) \, \dif \mu(x) \right\vert 
			\label{eq:W1_dual}.
		\end{align}
	\end{definition}
	The Monge-Kantorovich distance is a complete metric on $\PRnx{1}$ \cite[Lemma 6.14]{VillaniOptimalTransport2009}.

	\begin{lemma}
		\label{lem:lipschitz_B}
		Let 
		$\phi : \RR^d \to \RR$ be Lipschitz continuous, then
		\[
		\left\vert \int_\Rnx \phi(x) \, \dif \mu_1(x) - \int_\Rnx \phi(x) \, \dif \mu_2(x) \right\vert
		\leq
		\Lip(\phi) W_1(\mu_1, \mu_2).
		\]
	\end{lemma}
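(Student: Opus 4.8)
The plan is to reduce the statement directly to the dual characterisation of the Monge-Kantorovich distance in \cref{def:W1_dual} by a simple normalisation argument. The only point requiring attention is that the test functions admitted in the supremum \cref{eq:W1_dual} must be $1$-Lipschitz, whereas $\phi$ carries an arbitrary Lipschitz constant; so I would first rescale $\phi$ to make it an admissible competitor and then undo the scaling at the end.

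First I would dispose of the degenerate case $\Lip(\phi) = 0$ separately. In that case $\phi$ is constant, and since $\mu_1, \mu_2 \in \PRnx{1}$ are probability measures the two integrals $\int \phi \, \dif \mu_1$ and $\int \phi \, \dif \mu_2$ coincide. Hence both sides of the claimed inequality vanish and there is nothing to prove.

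For the remaining case $\Lip(\phi) > 0$, I would introduce the rescaled function $\psi \coloneqq \phi / \Lip(\phi)$, which satisfies $\Lip(\psi) = 1$ and is therefore admissible in the supremum defining $W_1$ in \cref{def:W1_dual}. This immediately gives
\[
\left\vert \int_\Rnx \psi(x) \, \dif \mu_1(x) - \int_\Rnx \psi(x) \, \dif \mu_2(x) \right\vert \leq W_1(\mu_1, \mu_2).
\]
Multiplying through by $\Lip(\phi) > 0$ and using $\phi = \Lip(\phi)\,\psi$ together with linearity of the integral then recovers the asserted bound.

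I do not anticipate any genuine obstacle: the lemma is essentially a reformulation of the duality formula for $W_1$, and the entire content is the rescaling trick combined with the observation that a constant function integrates to the same value against any two probability measures. The main care is merely to separate the $\Lip(\phi) = 0$ case so that division by $\Lip(\phi)$ is justified.
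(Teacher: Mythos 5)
Your proof is correct and follows the same route as the paper, which simply states that the lemma \enquote{follows directly from the duality formula} in \cref{def:W1_dual}; your rescaling of $\phi$ by $\Lip(\phi)$ (with the separate treatment of the constant case $\Lip(\phi)=0$) is exactly the detail the paper leaves implicit.
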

	
	\begin{proof}
		This follows directly from the duality formula \cref{eq:W1_dual}.
	\end{proof}
			
	With the Monge-Kantorovich distance we can define the main result of this article.
	The following estimate is a generalisation of Dobrushin's stability estimate \cite[Section 3.3]{GolseDynamicsLargeParticle2016}.
			
	\begin{theorem}[Stability estimate]
		\label{thm:dobrushin}~\\
		\index{Stability!Dobrushin's estimate}\index{Dobrushin's stability estimate}
		Suppose that for $i \in \{1,2\}$ the tuples $(\Bfy_i(t),\mu^t_i(t)) \in \Rny \times \mathcal{P}^1(\Rnx)$ are solutions of  
		\crefCharFlowODE
		with initial conditions
		\[
		\Bfy_i(0) = \init \Bfy_i, \quad \dot \Bfy_i(0) = \init \Bfv_i \quad \text{and} \quad 
		\mu_i(0) = \init \mu_i.
		\]
		Then,
		\begin{align}
			\norm{\Bfy_1(t) - \Bfy_2(t)} + &\norm{\dot \Bfy_1(t) - \dot \Bfy_2(t)}
			+ W_1(\mu_1(t), \mu_2(t)) 
			\notag \\
			\MoveEqLeft \leq C e^{Lt} \left(
			\norm{\init \Bfy_1 - \init \Bfy_2} + \norm{\init \Bfv_1 - \init \Bfv_2} 
			+ W_1(\init \mu_1, \init \mu_2) \right),
			\label{eq:dobr_stab_estm}
		\end{align}
		for some constants $L, C > 0$ which do only depend on the initial total energy of both states and on the maximal first moment 
		\[
		C_\mu \coloneqq 
		\max
		\left(\int (1+\Vert x\Vert) \, \dif \init \mu_1(x), \int (1+\Vert x\Vert) \, \dif \init \mu_2(x)
		\right).
		\]
	\end{theorem}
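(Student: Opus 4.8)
The plan is to run a single Grönwall argument on the combined quantity
\[
D(t) \coloneqq \norm{\Bfy_1(t) - \Bfy_2(t)} + \norm{\dot \Bfy_1(t) - \dot \Bfy_2(t)} + Q(t),
\]
where $Q(t)$ is a transported--coupling surrogate for $W_1(\mu_1(t),\mu_2(t))$. Since the total energy of each solution is conserved by \cref{lem:char_flow_conserves_energy}, both velocities stay bounded by a common constant $M_v$ depending only on the two initial energies; I would fix this $M_v$ at the outset so that all Lipschitz estimates below apply, and note that $M_v$ (through $\int \mathcal W_1 \, \dif\mu$, which is controlled by the first moment) depends on $C_\mu$.

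For the measure term I would first pick an optimal transport plan $\pi_0$ realising $W_1(\init\mu_1, \init\mu_2)$, push it forward by the two characteristic flows to obtain $\pi_t \coloneqq (X_1^t, X_2^t)\#\pi_0$, and set $Q(t) \coloneqq \int \norm{X_1^t(x_1) - X_2^t(x_2)} \, \dif\pi_0(x_1,x_2)$. Since $\pi_t$ is an admissible coupling of $\mu_1(t)$ and $\mu_2(t)$, the duality formula \cref{eq:W1_dual} gives $W_1(\mu_1(t),\mu_2(t)) \le Q(t)$, so it suffices to control $D$. Differentiating $Q$ under the integral (justified by the at most linear growth of the flows and $\pi_0 \in \PRnx{1}\times\PRnx{1}$) and using $\tfrac{\dif}{\dif t}\norm{u}\le\norm{\dot u}$, the integrand's derivative is $\norm{\Phi(X_1^t(x_1),\Bfy_1)[\dot\Bfy_1] - \Phi(X_2^t(x_2),\Bfy_2)[\dot\Bfy_2]}$; adding and subtracting $\Phi(X_2^t(x_2),\Bfy_2)[\dot\Bfy_1]$ and invoking \cref{ass:L_phi,ass:M_phi} together with $\norm{\dot\Bfy_1}\le M_v$ yields
\[
\dot Q(t) \le L_\Phi M_v\,\big(Q(t) + \norm{\Bfy_1 - \Bfy_2}\big) + M_\Phi\,\norm{\dot\Bfy_1 - \dot\Bfy_2}.
\]

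For the macroscopic part I would use $\tfrac{\dif}{\dif t}\norm{\Bfy_1-\Bfy_2}\le\norm{\dot\Bfy_1-\dot\Bfy_2}$ and estimate $\norm{\ddot\Bfy_1 - \ddot\Bfy_2}$ through the map $(\mu,\Bfy,\Bfv)\mapsto(\eff{m}(\mu,\Bfy))^{-1}\eff{F}(\mu,\Bfy,\Bfv)$. The key is that this map is Lipschitz with respect to $W_1 + \norm{\cdot} + \norm{\cdot}$: \cref{lem:effOM_effOF} makes $\effO{m},\effO{F}$ bounded and Lipschitz, \cref{lem:lipschitz_B} converts the measure dependence into a $W_1$-Lipschitz bound, and, since $\eff{m}$ is symmetric and uniformly elliptic, \cref{lem:m_inv_f} (applied on the complete metric space $\PRnx{1}\times\Rny\times\{\norm{\Bfv}\le M_v\}$) controls the inverse. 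This gives $\norm{\ddot\Bfy_1-\ddot\Bfy_2}\le L_h\big(W_1(\mu_1(t),\mu_2(t)) + \norm{\Bfy_1-\Bfy_2}+\norm{\dot\Bfy_1-\dot\Bfy_2}\big) \le L_h\,D(t)$. Collecting the three differential inequalities produces $\dot D \le L\,D$ for a constant $L$ depending only on $M_v$, the Lipschitz/bound constants, and $C_\mu$; Grönwall then yields $D(t)\le e^{Lt}D(0)$, and, because the optimal choice of $\pi_0$ gives $Q(0)=W_1(\init\mu_1,\init\mu_2)$, we recover \cref{eq:dobr_stab_estm} with $C=1$.

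I expect the main obstacle to be the measure-dependent Lipschitz estimate for $(\eff{m})^{-1}\eff{F}$: one must simultaneously handle the nonlinear matrix inverse and the fact that the measure enters through $W_1$ rather than through a norm, which is precisely where the combination of \cref{lem:lipschitz_B,lem:m_inv_f} with the uniform ellipticity of $\eff{m}$ is essential. The secondary subtlety is the bookkeeping that couples the infinite-dimensional transport term $Q$ with the finite-dimensional $\Bfy$-dynamics inside a single Grönwall inequality, and keeping the velocity bound $M_v$ from energy conservation in force so that $\Phi[\dot\Bfy]$ is genuinely Lipschitz.
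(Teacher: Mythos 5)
Your argument is correct, but it takes a genuinely different route from the paper. The paper recasts the characteristic-flow system as an ODE $\dot z = h(z;\init{\mu})$ on the Banach space $Z = Y \oplus \Rny \oplus \Rny$ of linearly growing flow maps, treats the initial measure as a \emph{parameter} of the vector field, and invokes the Fundamental Lemma (\cref{lem:fundamental_ode}): $W_1(\init{\mu}_1,\init{\mu}_2)$ enters statically, as the size $\varepsilon$ of the vector-field perturbation $\sup_z \Vert h(z;\init{\mu}_1)-h(z;\init{\mu}_2)\Vert$, the exponential growth comes from the Lipschitz constant of $h(\cdot\,;\init{\mu}_1)$ already established for \cref{thm:pks_well_posed}, and the resulting bound on $\Vert X_1^t - X_2^t \Vert_Y$ is converted into $W_1(\mu^t_1,\mu^t_2)$ only at the end, at the price of the weight $C_\mu$ in \cref{eq:W1_bound_by_X}. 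You instead track the measures dynamically through a transported coupling $Q(t)$ and close a single Gr\"onwall inequality; the infrastructure is the same (energy conservation via \cref{lem:char_flow_conserves_energy} for $M_v$, then \cref{lem:effOM_effOF}, \cref{lem:lipschitz_B}, ellipticity of $\eff{m}$ and \cref{lem:m_inv_f}), except that you apply \cref{lem:m_inv_f} once, jointly in $(\mu,\Bfy,\Bfv)$ on the complete metric space $\PRnx{1}\times\Rny\times\{\Vert\Bfv\Vert\le M_v\}$, where the paper applies it separately in $\mu$ (for $\varepsilon$) and in $(\varphi,\Bfy,\Bfv)$ (for the Lipschitz constant). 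Your route buys a cleaner statement: $C=1$, no function space $Y$, no Fundamental Lemma, and $C_\mu$ drops out of the Lipschitz bookkeeping (surviving only through the energy bound for $M_v$), because comparing the flows at coupled points $(x_1,x_2)$ removes the need for the $(1+\Vert x\Vert)$-weight of \cref{lem:lipschitz_A}. The price is that you use the primal (coupling) side of the Monge--Kantorovich distance: the existence of a plan $\pi_0$ with $Q(0)=W_1(\init{\mu}_1,\init{\mu}_2)$ is exactly Kantorovich--Rubinstein duality, which \cref{def:W1_dual} states only in dual form---this is standard (it is the content of the result of Golse cited there), and can also be sidestepped by taking $\pi_0$ merely $\epsilon$-optimal and letting $\epsilon\to 0$. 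In a final write-up you should also replace $\tfrac{\dif}{\dif t}\Vert\cdot\Vert$ by upper Dini derivatives (or use the integral form of Gr\"onwall), since norms of differences need not be differentiable at their zeros; the differentiation of $Q$ under the integral is legitimate, as you note, because $\Vert \dot X^t_i(x) \Vert \le M_\Phi M_v$ uniformly in $x$ and $t$.
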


	
	To prove this stability estimate, we use the following ODE estimate, see also \cite[Theorem 10.2]{HairerSolvingOrdinaryDifferential1993}.
	
	\begin{lemma}[{The Fundamental Lemma, {\cite[Chap 1., §7 Theorem VI]{WalterDifferentialIntegralInequalities1970}}}]
		\index{Stability!ODE}
		\label{lem:fundamental_ode}
		Let $A$ be a closed subset of a Banach space $Z$ and
		suppose that for $i \in \{1,2\}$ and $h_i : A \to Z$, the functions $z_i : [0,T] \to A$ solve
	\begin{subequations}
	\begin{empheq}[left={\empheqbiglbrace~}]{align}
			\dot z_i(t) &= h_i(z_i(t)) \quad \text{for~} t \geq 0,\\
			z_i(0) &= \init z_i.
		\end{empheq}
	\end{subequations}
		If for some constants $\varrho, \varepsilon, L > 0$ the following bounds hold
		\begin{enumerate}
			\item $\norm{\init z_1 - \init z_2} \leq \varrho$,
			\item $\norm{h_1(z) - h_2(z)} \leq \varepsilon \quad \text{for all~} z \in Z$,
			\item $\norm{h_1(z) - h_1(z')} \leq L \norm{z-z'} \quad \text{for all~} z,z' \in Z$,
		\end{enumerate}
		then,
		\begin{align}
			\norm{z_1(t) - z_2(t)} \leq \varrho e^{L t} + \frac{\varepsilon}{L} \left( e^{Lt} - 1 \right).
		\end{align}
	\end{lemma}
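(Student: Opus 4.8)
The plan is to reduce the vector-valued estimate to a scalar differential inequality for the real-valued function $u(t) \coloneqq \norm{z_1(t) - z_2(t)}$ and then compare it with the explicit solution of the associated linear scalar ODE. Writing $w(t) \coloneqq z_1(t) - z_2(t)$, the two evolution equations give $\dot w(t) = h_1(z_1(t)) - h_2(z_2(t))$, and I would split this difference as
\[
\dot w(t) = \big( h_1(z_1(t)) - h_1(z_2(t)) \big) + \big( h_1(z_2(t)) - h_2(z_2(t)) \big).
\]
Hypothesis (iii) bounds the first bracket by $L \norm{z_1(t) - z_2(t)} = L\, u(t)$, while hypothesis (ii), applied at the point $z_2(t) \in A$, bounds the second bracket by $\varepsilon$. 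Hence $\norm{\dot w(t)} \leq L\, u(t) + \varepsilon$ for every $t \in [0,T]$.

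The next step deals with the fact that $u = \norm{w}$ need not be differentiable, since the norm of a Banach space is in general not smooth (in particular not at the origin). Because $w$ is $C^1$, the composition $u$ is Lipschitz, hence differentiable almost everywhere; more robustly, its upper right Dini derivative satisfies the elementary inequality $D^+ u(t) \leq \norm{\dot w(t)}$. This follows from the triangle inequality $\norm{w(t+s)} - \norm{w(t)} \leq \norm{w(t+s) - w(t)}$ after dividing by $s > 0$ and taking the limit superior as $s \downarrow 0$. Combined with the bound from the first step, this yields the scalar differential inequality
\[
D^+ u(t) \leq L\, u(t) + \varepsilon, \qquad u(0) = \norm{\init z_1 - \init z_2} \leq \varrho,
\]
where the initial bound is exactly hypothesis (i).

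Finally, I would invoke the standard comparison principle for first-order differential inequalities phrased in terms of one-sided derivatives, which is precisely the content of the cited result of Walter. Let $v$ solve the scalar linear initial value problem $\dot v = L\, v + \varepsilon$, $v(0) = \varrho$, whose unique solution is $v(t) = \varrho\, e^{Lt} + \frac{\varepsilon}{L}\big( e^{Lt} - 1 \big)$. Since $u$ satisfies the same relation with $\leq$ in place of equality and a no-larger initial value, the comparison principle gives $u(t) \leq v(t)$ for all $t \in [0,T]$, which is exactly the claimed estimate.

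The only genuinely delicate point is the transition from the vector inequality $\norm{\dot w} \leq L\, u + \varepsilon$ to a usable scalar inequality for $u$: in an infinite-dimensional Banach space one cannot differentiate the norm along the trajectory directly, so the argument must be carried out via Dini derivatives together with a comparison theorem valid for such one-sided derivatives. Everything else is routine Gr\"onwall-type reasoning, and no structure of the concrete space $Z$ beyond completeness and the triangle inequality is needed.
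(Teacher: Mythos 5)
Your argument is correct, but there is nothing in the paper to compare it against: the paper does not prove this lemma at all, it imports it verbatim from the literature (Walter, Chap.~1, \S 7, Theorem VI, with a pointer to Hairer--N{\o}rsett--Wanner, Theorem 10.2). Your proof is essentially the standard proof underlying that citation, and all steps check out: the splitting $h_1(z_1)-h_2(z_2) = \big(h_1(z_1)-h_1(z_2)\big) + \big(h_1(z_2)-h_2(z_2)\big)$ uses (iii) and (ii) exactly where they apply (note the paper's hypotheses are sloppily quantified over $z \in Z$ although $h_i$ is only defined on $A$; your application at $z_2(t) \in A$ is the correct reading), the Dini-derivative bound $D^+\norm{w}(t) \leq \norm{\dot w(t)}$ is the right device for a norm that need not be differentiable in a Banach space, and the comparison with $v(t) = \varrho\, e^{Lt} + \tfrac{\varepsilon}{L}(e^{Lt}-1)$, which indeed solves $\dot v = Lv + \varepsilon$, $v(0)=\varrho$, is licensed by Walter's comparison theorem since the right-hand side $u \mapsto Lu + \varepsilon$ is Lipschitz. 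One remark: you could bypass the Dini-derivative machinery entirely by working in integral form, since $w(t) = w(0) + \int_0^t \dot w(s)\,\dif s$ gives $u(t) \leq \varrho + \int_0^t \big(L\,u(s) + \varepsilon\big)\,\dif s$, and the integral Gr\"onwall inequality then yields the same bound; this is marginally more elementary, at the cost of obscuring the connection to the differential-inequality formulation that the paper's citation refers to.
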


	\begin{proof}[Proof of \cref{thm:dobrushin}]
		We define 
		\begin{align*}
			h_i(\varphi,\Bfy,\Bfv) \coloneqq
			h(\varphi,\Bfy, \Bfv; \init{\mu}_i)
			\quad
			\text{for~} i \in \{1,2\}.
		\end{align*}
		
		For $i \in \{1,2\}$, we consider the ODEs 
	\begin{subequations}
	\begin{empheq}[left={\empheqbiglbrace~}]{align*}
			\dot z_i = h_i(z_i) \\
			z_i(0) = \init z_i
		\end{empheq}
	\end{subequations}
		with initial condition
		\[
		\init z_i \coloneqq ( x \mapsto x, \init \Bfy_i, \init{\Bfv}_i).
		\]
		
		Next, we check the conditions of \cref{lem:fundamental_ode}.\\
		1. We define
		\[
		\varrho \coloneqq \Vert \init z_1 - \init z_2 \Vert_Z
		=
		\Vert \init \Bfy_1 - \init \Bfy_2 \Vert
		+
		\Vert \init{\Bfv}_1 -  \init{\Bfv}_2 \Vert.
		\]		
		2. For arbitrary $(\varphi, \Bfy, \Bfv) \in Z_{M_v}$, we consider the maps
		\[
		\PRnx{1} \to \Rny : \mu \mapsto h(\varphi,\Bfy,\Bfv; \mu).
		\]
		We compute
		\begin{align*}
			\MoveEqLeft[2]\Vert h(\varphi,\Bfy,\Bfv; \mu_1) - h(\varphi,\Bfy,\Bfv; \mu_2) \Vert_Z			
			\\
			&=
			\Vert 
			(\eff{m}(\mu_1,\Bfy))^{-1} \eff{F}(\mu_1,\Bfy,\Bfv)
			- 
			(\eff{m}(\mu_2,\Bfy))^{-1} \eff{F}(\mu_2,\Bfy,\Bfv)\Vert.
		\end{align*}
		\cref{lem:lipschitz_B,lem:effOM_effOF} show that the maps
		\[
		\mu \mapsto \eff{m}(\mu,\Bfy)
		\quad \text{and} \quad
		\mu \mapsto \eff{F}(\mu,\Bfy,\Bfv)
		\]
		are bounded and Lipschitz continuous.
		Like in the proof of \cref{thm:pks_well_posed}, $\eff{m}(\mu,\Bfy)$ is also symmetric and uniformly elliptic for all $\mu \in \PRnx{1}$.
		Therefore, \cref{lem:m_inv_f} shows that $\mu \mapsto h(\varphi,\Bfy,\Bfv; \mu)$ is Lipschitz continuous, i.e.
		\[
		\Vert h_1(\varphi,\Bfy,\dot\Bfy) 
		- h_2(\varphi,\Bfy,\dot\Bfy) \Vert
		\leq
		L_\mu \, W_1(\init{\mu}_1, \init{\mu}_2) 
		\]
		and the Lipschitz constant $L_\mu$ depends only on $M_v$ and $C_\mu$.
		We set 
		\[
		\varepsilon \coloneqq L_\mu \, W_1(\init{\mu}_1, \init{\mu}_2).
		\]		
		3. From the proof of \cref{thm:pks_well_posed} we know that $h_1 : Z_{M_v} \to Z$ is Lipschitz continuous. The constant $\Lip(h_1)$ depends only on $M_v$ and $C_\mu$.
		\\
		
		Application of \cref{lem:fundamental_ode} yields
		\begin{align}
			\MoveEqLeft
			\Vert \Bfy_1(t) - \Bfy_2(t) \Vert
			+
			\Vert \dot \Bfy_1(t) - \dot \Bfy_2(t) \Vert
			+
			\Vert X^t_1 - X^t_2 \Vert_X
			\notag\\
			&\leq
			\varrho e^{t L_{f_1}} +  \frac{L_{\mu} W_1(\init \mu_1, \init \mu_2)}{\Lip(h_1)} ( e^{t \Lip(h_1)} - 1)		
			\label{eq:f1f2_stab_bound}
		\end{align}
		
		In comparison with the claim, equation \cref{eq:f1f2_stab_bound} does not include $W_1(\mu^t_1,\mu^t_2)$ yet. We derive one more estimate to fix this.
		For an arbitrary Lipschitz continuous map $\phi : \Rnx \to \RR$, we use \cref{lem:lipschitz_A} and \cref{lem:lipschitz_B} to compute
		\begin{align*}
			\MoveEqLeft[4] 
			\left\vert \int \phi(x) \, \dif \mu^t_1(x) - \int \phi(x) \, \dif \mu^t_2(x)
			\right\vert \\
			& \leq
			\left\Vert 
			\int \phi(X_1^t(x')) \, \dif \init \mu_1(x') - \int \phi(X_2^t(x')) \, \dif \init \mu_1(x') \right\Vert \\
			& \quad + 
			\left\Vert \int \phi(X_2^t(x')) \, \dif \init \mu_1(x') - \int \phi(X_2^t(x')) \, \dif \init \mu_2(x') \right\Vert
			\\
			&\leq \Lip(\phi) C_\mu \Vert X_1^t - X_2^t \Vert_X
			+ \Lip(\phi) W_1(\init \mu_1, \init \mu_2).
		\end{align*}
		
		Taking the supremum over all $\phi$ with $\Lip(\phi) \leq 1$ yields
		\begin{align}
			\label{eq:W1_bound_by_X}
			W_1(\mu^t_1, \mu^t_2)
			\leq
			C_\mu \Vert X^t_1- X^t_2\Vert_X
			+ W_1(\init \mu_1, \init \mu_2).
		\end{align} 
		
		Combining \cref{eq:f1f2_stab_bound} 
		and
		\cref{eq:W1_bound_by_X} yields
		the claim.

	\end{proof}
	
	\subsection{Convergence of the mean-field limit}
	
	With the stability estimate \cref{thm:dobrushin}, we can finally show convergence of the mean-field limit.

	\begin{corollary}
		\label{cor:mean-field-limit}
		We consider $\init \mu \in \PRnx{1}$, $\init \Bfy, \init \Bfv \in \Rny$ and a sequence of initial particle positions $\big( \init \BfCalX_k \big)_{k \in \mathbb N}$ such that
		\begin{align}
			W_1( \mu^{\emp}_{\init \BfCalX_k}, 
			\init \mu) \to 0 \quad \text{for~} k \to \infty.
			\label{eq:mean_field_limit_setup}
		\end{align}
		Let $(\mu^t, \Bfy(t))$ and $(\mu^t_k, \Bfy_k(t))$ be solutions for \crefCharFlowODE with initial conditions \\$(\init \mu, \init \Bfy, \init \Bfv)$ and   $( \mu^{\emp}_{\init \BfCalX_k}, \init \Bfy, \init \Bfv)$ for $k \in \mathbb N$.
		
		Then,
		\begin{align*}
			\norm{\Bfy(t) - \Bfy_k(t)} + \norm{\dot \Bfy(t) - \dot \Bfy_k(t)}
			+ 
			W_1(\mu^t, \mu^t_k) 
			\leq C e^{Lt} 
			W_1(\init \mu, \mu^{\emp}_{\init \BfCalX_k}) \to 0.
		\end{align*}
	\end{corollary}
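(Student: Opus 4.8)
The plan is to derive the corollary as an essentially immediate consequence of the stability estimate \cref{thm:dobrushin}, the only real work being to check that its constants can be taken uniform along the sequence. First I would confirm that all solutions in the statement exist and are unique. The target solution $(\mu^t,\Bfy)$ has initial datum $\init\mu \in \PRnx{1}$ by hypothesis, and each approximating solution $(\mu^t_k,\Bfy_k)$ has initial datum $\mu^{\emp}_{\init \BfCalX_k}$, which is an empirical measure and hence lies in $\PRnx{1}$ (a finitely supported measure has finite first moment). Thus \cref{thm:pks_well_posed} applies to every index $k$ as well as to the limiting data, so all objects are well-defined on $[0,T]$.

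Next I would apply \cref{thm:dobrushin} to the pair with $i=1$ equal to $(\Bfy,\mu^t)$ and $i=2$ equal to $(\Bfy_k,\mu^t_k)$. The key simplification is that both families share the \emph{same} initial macroscopic data $\init \Bfy$ and $\init \Bfv$, so that $\Vert \init \Bfy_1 - \init \Bfy_2 \Vert = 0$ and $\Vert \init \Bfv_1 - \init \Bfv_2 \Vert = 0$; only the initial distributions differ. Consequently the right-hand side of \cref{eq:dobr_stab_estm} collapses to $C e^{Lt}\, W_1(\init\mu, \mu^{\emp}_{\init \BfCalX_k})$, which is exactly the bound asserted in the corollary.

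The one point needing care, which I expect to be the main (if mild) obstacle, is the uniformity of $C$ and $L$ in $k$. By \cref{thm:dobrushin} these constants depend only on the initial total energies of the two states and on the maximal first moment $C_\mu$. Since $W_1$-convergence of $\mu^{\emp}_{\init \BfCalX_k}$ to $\init\mu$ forces convergence, and therefore boundedness in $k$, of the first moments $\int (1+\Vert x \Vert)\,\dif \mu^{\emp}_{\init \BfCalX_k}(x)$, the quantity $C_\mu$ stays bounded along the sequence. The initial energy of each approximating state is $\tfrac{1}{2}\Vert\init\Bfv\Vert^2 + \mathcal{W}_0(\init\Bfy) + \int (\tfrac{m}{2}\Vert\Phi(x,\init\Bfy)[\init\Bfv]\Vert^2 + \mathcal{W}_1(x))\,\dif \mu^{\emp}_{\init \BfCalX_k}(x)$; here the $\Phi$-term is bounded by the fixed constant $\tfrac{m}{2}M_\Phi^2\Vert\init\Bfv\Vert^2$ using \cref{ass:M_phi}, while $\mathcal{W}_1$ grows at most linearly (since $\Vert F_1 \Vert \leq M_1$ by \cref{ass:M_F1}, so $\vert\mathcal{W}_1(x)\vert \leq \vert\mathcal{W}_1(0)\vert + M_1\Vert x\Vert$), whence $\int \mathcal{W}_1\,\dif \mu^{\emp}_{\init \BfCalX_k}$ is again dominated by the bounded first moment. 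Therefore both energies are uniformly bounded in $k$, and $C$ and $L$ may be chosen independently of $k$.

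Finally, fixing $t \in [0,T]$ (or arguing uniformly on the compact interval, since $C e^{Lt} \leq C e^{LT}$) and letting $k \to \infty$, the hypothesis $W_1(\mu^{\emp}_{\init \BfCalX_k},\init\mu)\to 0$ drives the right-hand side to zero, which yields the claimed convergence.
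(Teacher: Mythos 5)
Your proof is correct and takes essentially the same route as the paper, whose entire proof is the single remark that the corollary ``follows directly from the stability estimate \cref{thm:dobrushin}'': you apply that estimate with identical initial macroscopic data $(\init\Bfy,\init\Bfv)$ so that only the $W_1$ term survives on the right-hand side. Your additional verification that the constants $C$ and $L$ can be chosen uniformly in $k$ --- using that $W_1$-convergence bounds the first moments, that $\mathcal{W}_1$ grows at most linearly by \cref{ass:M_F1}, and that \cref{ass:M_phi} controls the kinetic term, so the initial energies stay bounded along the sequence --- is a genuine detail the paper leaves implicit, and you handle it correctly.
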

	\cref{cor:mean-field-limit} follows directly from the stability estimate \cref{thm:dobrushin}.	
	\\
	
	We use \cref{cor:mean-field-limit} as the main mean-field limit result.
	For the subtleties of convergences of mean-field limits in different topologies, we refer to \cite[Section 1.4]{GolseDynamicsLargeParticle2016}.
	In particular, the existence of suitable initial conditions such that \cref{eq:mean_field_limit_setup} holds is shown in \cite[Section 1.4.4]{GolseDynamicsLargeParticle2016}.

	\begin{remark}[{Macroscopic equations}]
	Since the mean-field PDE \crefMeanFieldPDE is essentially a transport equation, there are several ways to obtain a macroscopic approximation for the particle dynamics. 
	One approach would be to assume that the density $f(x,t)$ has the shape of a normal distribution and then find equations for the first moments by inserting this ansatz into the \crefMeanFieldPDE.
	In the context of muscle models, this method is called distributed moment method \cite{ZahalakDistributionMomentApproximation1981}. The distributed moment method is the foundation for most numerical simulations of muscle tissue which include fibers, see for example \cite{BoelMicromechanicalModellingSkeletal2008, GfrererFiberBasedModeling2021, HeidlaufMultiScaleContinuum2016}.
	\end{remark}

	\section{Conclusion}
	We have generalised the results from our previous work \cite{PlunderCoupledSystemsLinear2020} to the case of nonlinear constraints. In particular, the same stability estimate as in the linear case also holds for nonlinear constraints. 
	Moreover, the proof follows Dobrushin's classical approach, which shows that many properties of unconstrained particle systems generalise well to particle systems with uniform, full-rank constraints.
	
	Nonetheless, our results are still restricted a particular class of particle systems with uniform, full-rank constraints. 
	We see two main directions for further study.
	First, with view towards possible applications, it would be desirable to extend our results to cases which cover more general constraints. For example, the particles could switch between a constrained case and an unconstrained case, as outlined in \cite[Section 5.3]{PlunderCoupledSystemsLinear2020}.
	Second, to further explore kinetic theory-related aspects, one could drop the full-rank condition and add interaction forces between the particles. This would lead to systems with similarities to Vlasov equations. In particular, if the constraints are not of full rank, the propagation of chaos becomes non-trivial since the state space of the particles could be both non-linear and dependent on the state of the macroscopic component.

	\section*{Acknowledgements} We want to thank Sara Merino-Aceituno for her numerous comments.
	Steffen Plunder is funded by the Vienna Science and Technology Fund (WWTF), grant VRG17-014.

	\appendix
	
	\section{Derivation of the ODE model}
	\label{sec:appendix}
	
	In this section, we show that solutions of the DAE model \crefDAE are also solutions of the ODE model \crefODE.
	We recall the DAE model	
	\begin{subequations}
		\begin{empheq}[left={\empheqbiglbrace~}]{align}
			\label{eq:app_x_micro_ind3}
			m \ddot{\BfX}_i = F_1(\BfX_i) - \nabla_{\BfX_i} g(\BfX_i,\Bfy) \Bf\lambda_i 
			& \quad \fai,
			\\
			\label{eq:app_y_micro_ind3}
			\ddot \Bfy = F_0(\Bfy) - \frac{1}{N} \sum_{j=1}^N \nabla_{\Bfy} g(\BfX_j,\Bfy) \Bf\lambda_j , &
			\\
			\label{eq:app_g_micro_ind3}
			g(\BfX_i,\Bfy) = g(\init \BfX_i, \init \Bfy)
			& \quad \fai
		\end{empheq}
	\end{subequations}
 	and the ODE model
	\begin{subequations}
 		\begin{empheq}[left={\empheqbiglbrace~}]{align}
 			\effN{m} \ddot \Bfy &= \effN{F}, 
 			\label{eq:app_y_micro_ode}\\
 			\dot \BfXi &= \Phi_i [ \dot \Bfy ], \fai.
 			\label{eq:app_x_micro_ode}
 		\end{empheq}
 	\end{subequations}
		
	\begin{proof}[Proof of \cref{prop:ode_dae}]
		In the following we assume $\BfXi \in C^1([0,T],\Rnx)$ and $\Bfy \in C^2([0,T],\Rny)$ for $1 \leq i \leq N$. We start with showing \enquote{$\cref{eq:app_g_micro_ind3} \Leftrightarrow \cref{eq:app_x_micro_ode}$}. Thereafter, we prove that if $(\BfCalXN,\Bfy)$ satisfy \cref{eq:app_x_micro_ode}, then \enquote{$\cref{eq:app_x_micro_ind3} \text{~and~} \cref{eq:app_y_micro_ind3} \Leftrightarrow \cref{eq:app_y_micro_ode}$}.
		
		1. We show that \cref{eq:app_g_micro_ind3} is equivalent to \cref{eq:app_x_micro_ode}.
		\begin{align}
			\cref{eq:app_g_micro_ind3}
			\Leftrightarrow 
			&~
			\frac{\mathrm{d} g(\BfXi,\Bfy)}{\mathrm{d} t} = \DifXi g (\BfXi,\Bfy) [\dot \BfX_i] + \Dify g (\BfXi,\Bfy) [\dot \Bfy] = 0
			\\
			\Leftrightarrow 
			&~ \dot \BfXi = 
			\underbrace{-\left( \DifXj g(\BfXi,\Bfy) \right)^{-1} 
			\Dify g(\BfXi,\Bfy)}_{= \Phi_i} [ \dot \Bfy ]
			\Leftrightarrow \cref{eq:app_x_micro_ode}
		\end{align}
		
		2. Under the condition that \cref{eq:app_x_micro_ode} holds, we show that
		\cref{eq:app_x_micro_ind3,eq:app_y_micro_ind3} are equivalent to \cref{eq:app_y_micro_ode}.
		
		Taking the time derivative of \cref{eq:app_x_micro_ode} implies
		\begin{align}
			\ddot \BfXi 
			&= \Phi_i[ \ddot \Bfy ] + \dot \Phi_i [ \dot \Bfy ] \notag \\
			&= \Phi(\BfXi,\Bfy)[ \ddot \Bfy ]+ \partial_{\BfXi} \Phi(\BfXi,\Bfy)[ \dot \Bfy, \dot \BfXi] + \partial_{\Bfy} \Phi(\BfXi,\Bfy)[\dot \Bfy, \dot \Bfy]
			\notag\\
			&= \Phi(\BfXi,\Bfy)[ \ddot \Bfy ]+ \partial_{\BfXi} \Phi(\BfXi,\Bfy)[ \dot \Bfy, \Phi(\BfXi,\Bfy)[ \dot \Bfy ]] + \partial_{\Bfy} \Phi(\BfXi,\Bfy)[\dot \Bfy, \dot \Bfy]
			\label{eq:tmp_g_micro_ind1_ddotX}
		\end{align}
	where we used \cref{eq:app_x_micro_ode} again in the last line to remove the dependency on $\dot \BfXi$.
	As a side-effect we obtain $\BfXi \in C^2([0,T],\Rnx)$.
	
	To shorten \cref{eq:tmp_g_micro_ind1_ddotX}, we define
	\begin{align}
		\Omega_i[\Bf v, \Bf w] 
		&\coloneqq \Omega(\BfXi, \Bfy)[\Bf v, \Bf w]
		\notag \\
		&\coloneqq \DifXi \Phi(\BfXi,\Bfy)[ \Bf v, \Phi(\BfXi,\Bfy)[ \Bf w ] ] + \Dify \Phi(\BfXi,\Bfy)[ \Bf v, \Bf w]
	\end{align}
	for $\Bf v, \Bf w \in \Rny$.
	Then, we can write \cref{eq:tmp_g_micro_ind1_ddotX} as
	\begin{align}
		\ddot \BfXi = \Phi_i[ \ddot \Bfy ] + \Omega_i [\dot \Bfy, \dot \Bfy].
		\label{eq:app_ddX}
	\end{align}
	With this preparation we can prove the desired equivalence.
	
	2.a) \enquote{$\cref{eq:app_x_micro_ind3} \text{~and~} \cref{eq:app_y_micro_ind3} \Rightarrow \cref{eq:app_y_micro_ode}$}
		
		We assume that $(\BfCalXN, \Bfy)$ are a solution of the DAE model. Then,
		\cref{eq:app_x_micro_ind3} and \cref{eq:app_ddX} imply
		\begin{align}
		\Bf\lambda_i
		&=
		(
		\nabla_{\BfXi} g(\BfXi,\Bfy)
		)^{-1}
		( 
		F_1(\BfX_i) 
		- m \ddot \BfX_i
		) 
		\label{eq:app_lambda_eq_dae} \\ 
		&=	(
		\nabla_{\BfXi} g(\BfXi,\Bfy)
		)^{-1}
		( 
		F_1(\BfX_i) 
		- m \Phi_i[ \ddot \Bfy ] - m\Omega_i [\dot \Bfy, \dot \Bfy]
		)
		\label{eq:app_lamda_formula}
		\end{align}
		Inserting \cref{eq:app_lamda_formula} into \cref{eq:app_y_micro_ind3} gives 
		\begin{align}
		\ddot \Bfy 
		= 
		F_0(\Bfy) 
		+ \frac{1}{N} \sum_{j=1}^N 
		\Phi_j^T
		(F_1(\BfXj) 
		- m 
		(	
		\Phi_j [ \ddot \Bfy ]
		+ \Omega_j [ \dot \Bfy , \dot \Bfy ]
		)
		).
		\label{eq:app_y_preode}
		\end{align}
		Collecting the acceleration terms to the left-hand side yields
		\begin{align}
			\underbrace{(I_{\Rny} + \frac{m}{N} \sum_{j=1}^N \Phi_j^T \Phi_j )}_{\eqqcolon \effN{m}}
			\,
			\ddot \Bfy 
			&= 
			\underbrace{
				F_0(\Bfy) 
				+ 
				\frac{1}{N} \sum_{j=1}^N \Phi_j^T  (   
				F_1(\BfXj) - m \Omega_i [ \dot \Bfy , \dot \Bfy ])
			}_{\eqqcolon \effN{F}}
		\end{align}
		which is equivalent to \cref{eq:app_y_micro_ode}.

		2.b) \enquote{$\cref{eq:app_x_micro_ind3} \text{~and~} \cref{eq:app_y_micro_ind3} \Leftarrow \cref{eq:app_y_micro_ode}$}
		We assume now that $(\BfCalXN, \Bfy)$ solves the ODE model.
		
		To find a solution of the DAE model, we define
		\begin{align}
		\Bf \lambda_i \coloneqq	(
		\nabla_{\BfXi} g(\BfXi,\Bfy)
		)^{-1}
		( 
		F_1(\BfX_i) 
		- m \Phi_i[ \ddot \Bfy ] - m\Omega_i [\dot \Bfy, \dot \Bfy]
		)
		\label{eq:app_lambda_def}
		\end{align}
		which is exactly \cref{eq:app_lamda_formula}.
		The assumptions $g \in C^2$, $\nabla_\BfXi g(\BfXi,\Bfy)$ invertable and $\Bfy \in C^2$ imply  $\Bf\lambda_i \in C^0([0,T],\Rnx)$.
		
		Inserting \cref{eq:app_ddX} into \cref{eq:app_lambda_def} yields \cref{eq:app_lambda_eq_dae} which is equivalent to \cref{eq:app_x_micro_ind3}.
		
		And as a final step, inserting \cref{eq:app_lambda_def} into  \cref{eq:app_y_micro_ode} yields \cref{eq:app_y_micro_ind3}.
	\end{proof}

	\bibliographystyle{apalike}  
	\bibliography{plunder_main}

	\medskip
	\medskip
	
\end{document}